\newtheorem{Theorem}{Theorem}[section]
\newtheorem{Lemma}[Theorem]{Lemma}
\numberwithin{equation}{section} \allowdisplaybreaks
\allowdisplaybreaks \setlength{\textwidth}{15cm}
\begin{document}
\author{Yi Peng}
\address{College of Mathematics and Statistics, Chongqing University, Chongqing, 401331,  China.}
\email[Y. Peng]{20170602018t@cqu.edu.cn}

\author{Huaqiao Wang}
\address{College of Mathematics and Statistics, Chongqing University, Chongqing, 401331,  China.}
\email[H.Q. Wang]{wanghuaqiao@cqu.edu.cn}

\author{Qiuju Xu}
\address{School of Mathematical Sciences, Chongqing Normal University, Chongqing, 400047, China.}
\email[Q.J. Xu]{smlynice@163.com}

\title[Derivation of Hall-MHD equations]
{Derivation of the Hall-MHD equations from the Navier-Stokes-Maxwell equations}
\thanks{Corresponding author: wanghuaqiao@cqu.edu.cn}
\keywords{Hall-MHD equations, Navier-Stokes-Maxwell equations, whole space, spectral analysis, energy estimates.}\
\subjclass[2010]{76W05; 35Q35; 35D30.}

\begin{abstract}
By using a set of scaling limits, the authors in \cite{ADFL,SS} proposed a framework of deriving the Hall-MHD equations from the two-fluids Euler-Maxwell equations for electrons and ions. In this paper, we derive the Hall-MHD equations from the Navier-Stokes-Maxwell equations with generalized Ohm's law in a mathematically rigorous way via the spectral analysis and energy methods.
\end{abstract}

\maketitle
\section{Introduction}

The Hall-MHD equations have the following form in $[0,T]\times\mathbb{R}^{3}$:
\begin{equation}\label{hallmhd}
\begin{cases}
\partial _tu+{\rm div}(u\otimes u)-\Delta u+\nabla p= (\nabla\times B)\times B,&{\rm div}u = 0, \\
\partial _tB+ \nabla\times((\nabla\times B)\times B)-\nabla\times (u\times B)=\Delta B,&{\rm div}B=0.
\end{cases}
\end{equation}
Here, $u(t,x)$ and $B(t,x)$ are the fluid velocity and magnetic field, and $p$ is the pressure.
The equations \eqref{hallmhd} have been studied in physics for a long time and have many applications in the field of physics such as earth generators, etc. In general, the Hall term $\nabla\times((\nabla\times B)\times B)$ does not affect the properties of the magnetohydrodynamic equations. Under strong magnetic field or when the plasma density is relatively small, such as the magnetic field reconnection in space plasmas \cite{FTG,HG}, star formation \cite{BT,WM}, geo-dynamo \cite{MGM} and so on, the Hall effect can not be ignored. Lighthill \cite{LMJ} first studied the Hall effects and physically derived the Hall terms. Next, we sketch the formal derivation in \cite{LMJ}. Since the fluids are composed of charged particles, their flow generates electric field $E$ and magnetic field $B$. Here the magnetic field $B$ satisfies Ampere's law:
\begin{equation}\label{con:alaw}
j=\nabla\times B,
\end{equation}
where $j$ is the current density, while the electric field $E$ satisfies Faraday's law:
\begin{equation}\label{con:flaw}
\partial_{t}B=-\nabla\times E.
\end{equation}
If $n_{e}$ and $n_{i}$ are the number densities of ions and electrons, we have
$$\rho=n_{i}m_{i}+n_{e}m_{e},\;\; u=\frac{n_{i}m_{i}u_{i}+n_{e}m_{e}u_{e}}{n_{i}m_{i}+n_{e}m_{e}},$$
where $m_{i}$, $m_{e}$ are the masses of ion and electron, $u_{i}$ and $u_{e}$ denote the velocities, $\rho$ stands for the charge density. Since $m_{i}$ is much larger than $m_{e}$ (more than 1800 times), $u$ is very close to $u_{i}$,  $j$ can be approximated as
\begin{equation}\label{con:jandve}
j= en_{e}(u-u_{e}),
\end{equation}
where $-en_{e}$ is the electron charge density. The electrons and ions momentum equations have the following form:
\begin{equation}\label{con:momentumofelectron}
n_{e}m_{e}(\partial_{t}u_{e}+u_{e}\cdot\nabla u_{e})=-\nabla p_{e}-M-n_{e}e(E+u_{e}\times B),
\end{equation}
\begin{equation}\label{con:momentumofion}
n_{i}m_{i}(\partial_{t}u_{i}+u_{i}\cdot\nabla u_{i})=-\nabla p_{i}+M+n_{i}Ze(E+u_{i}\times B),
\end{equation}
where $Z$ is the average charge of the ions, $p_{i}$ and $p_{e}$ are the pressures of ions and electrons, $M=-n_{e}e\kappa j$ denotes the momentum loss of electrons per unit volume during the collision and $\kappa$ represents the resistivity.
Since the left-hand side of \eqref{con:momentumofelectron} is very small compared to the left-hand side of \eqref{con:momentumofion}, by putting the left-hand side of \eqref{con:momentumofelectron} equal to zero, we can get the following approximation:
\begin{equation}\label{E}
E=\kappa j+\frac{\nabla p_{e}}{n_{e}e}-u_{e}\times B.
\end{equation}
Combining \eqref{con:alaw}, \eqref{con:flaw}, \eqref{con:jandve} and \eqref{E}, we can formally derive the second equation of \eqref{hallmhd} when $en_{e}=1$ and $\kappa=1$ (see \cite{CLMBC,LMJ} and the references therein for more details).

However, the above formal derivation about Hall term may not be satisfactory. Since, in general, Ampere's law with Maxwell's correction
$$j+\partial_{t}E=\nabla\times B$$
is required in electromagnetic effects.

Notice that from the two-phase flow model or dynamic model, the Hall-MHD system is derived through the two-layer scale limit (scaling limits) in reference \cite{ADFL,SS}. However, in these literature, only a framework is provided and rigorous mathematical proof is not given. In this paper, we want to rigorously derive Hall-MHD system \eqref{hallmhd} (a dimensionless version) from the incompressible Navier-Stokes-Maxwell equations with generalized Ohm's law, which takes into account the full electromagnetic phenomenon described by the Maxwell system. We first consider the following scaled two-fluid incompressible Navier-Stokes-Maxwell system stemming from \cite{ADFL} with parameters $\varepsilon$ and $\gamma$:
\begin{equation}\label{twofiuld}
\begin{cases}
\varepsilon ^{2}\left(\partial _tu_e+{\rm div}\left(u_e\otimes u_e\right)-\Delta u_{e}\right)+\nabla p_{e}=-\alpha^{2}(E+u_e\times B)-\beta(u_e-u_i),
&{\rm div}u_e=0, \\
\partial _tu_i+{\rm div}(u_i\otimes u_i)-\Delta u_{i}+\nabla p_{i}=\alpha^{2}(E+u_i\times B)-\beta(u_i-u_e),&{\rm div}u_i=0, \\
\gamma^{2}\partial _tE-\nabla \times B=-j,\\
\partial _tB+\nabla \times E=0,&{\rm div}B=0,\\
j=\frac{1}{\eta}(u_i-u_e),
\end{cases}
\end{equation}
where $0<\varepsilon^{2}\ll 1$ denotes the mass ratio of the electron to the ion, $\alpha$ denotes the ratio of the electric energy to the thermal energy, $\beta$ stands for the
relaxation frequency of the electron and ion velocities due to collisions. $\eta$ denotes the ratio of the charge current scale to the electron or
ion current scales. $\gamma>0$ is the ratio of the fluid velocity to the speed of light.

Next, we will give a formal asymptotic analysis of equations \eqref{twofiuld}. For fixed $\gamma$, setting $\alpha^{2}\eta=1$, and letting $u_{i}=u$, $p=p_{i}+p_{e}$, and  $\varepsilon\rightarrow 0$,  we can formally get the following incompressible Navier-Stokes-Maxwell system with generalized Ohm's law:
\begin{equation}\label{con:NSM}
\begin{cases}
\frac{1}{\eta}(E+u\times B)-\beta\eta j+\nabla p_{e}=j\times B,&{\rm div}j = 0,\\
\partial _tu+{\rm div}(u\otimes u)-\Delta u+\nabla p=j\times B,&{\rm div}u = 0, \\
\gamma^{2}\partial _tE-\nabla \times B=-j,\\
\partial _tB+\nabla \times E=0,&{\rm div}B=0,
\end{cases}
\end{equation}
which enjoys the formal energy conservation law:
\begin{equation}\label{energy}
\frac{1}{2}\frac{d}{dt}\left(\|u\|_{L_{x}^{2}}^{2}+\|B\|_{L_{x}^{2}}^{2}+\|\gamma E\|_{L_{x}^{2}}^{2}\right)+\left\|\nabla u\right\|_{L_{x}^{2}}^{2}+\beta\eta^{2}\|j\|_{L_{x}^{2}}^{2}=0.
\end{equation}

Furthermore, letting $\gamma\rightarrow 0$, and then cancelling out the terms $E$ and $j$, the Hall-MHD system \eqref{hallmhd} (a dimensionless version) is obtained formally.
We know that for any initial data $(u^{0},B^{0})\in L^{2}(\mathbb{R}^{3})$ such that ${\rm div}u^{0}=0$, there exists a global weak solution $(u,B)\in L^{\infty}(\mathbb{R}_{+},L^{2}(\mathbb{R}^{3}))\bigcap
L^{2}(\mathbb{R}_{+},H^{1}(\mathbb{R}^{3}))$ to the system \eqref{hallmhd} (for example, see \cite{CDL}).

Now, we begin to introduce the research history of the above systems. For the following Navier-Stokes-Maxwell system with ideal Ohm's law
\begin{equation}\label{NSM}
\begin{cases}
\partial _tu+{\rm div}(u\otimes u)-\Delta u=-\nabla p+j\times B,&{\rm div}u = 0, \\
\partial _tE-\nabla \times B=-j,&j=\sigma(E+u\times B),\\
\partial _tB+\nabla \times E=0,&{\rm div}B=0,
\end{cases}
\end{equation}
where $\sigma> 0$ is the electrical conductivity of the fluid, the existence of global and finite energy weak solutions remains an interesting open problem, in both the dimensions $d=2,3$, for lack of compactness in the magnetic field $B$, which prevents taking limits in the Lorentz force $j\times B$. However, existence results are available when more regularity on the initial data is imposed.
Masmoudi \cite{MN} built up the global existence and uniqueness of the strong solutions to \eqref{NSM} in two dimensions with any large initial data $(u_{0},E_{0},B_{0})\in L^{2}\times H^{s},~ s>0$.
Later, Ibrahim-Keraani \cite{IK} established the global existence of strong solutions to \eqref{NSM} for the small initial data $(u_{0},E_{0},B_{0})\in \dot{B}^{1/2}_{2,1}\times H^{1/2}$ in three dimensions, and the initial data $(u_{0},E_{0},B_{0})\in \dot{B}^{0}_{2,1}\times L^{2}_{{\rm log}}$ in two dimensions.
These results are extended in \cite{GI} to small initial data $(u_{0},E_{0},B_{0})\in H^{1/2}$ in three dimensions, and $(u_{0},E_{0},B_{0})\in L^{2}\times L^{2}_{{\rm log}}$ in two dimensions by applying fixed point arguments. Recently, Ars\'{e}nio-Gallagher \cite{AG} studied global existence of weak solutions in largest possible functional spaces based on a new maximal estimate on the heat equation in Besov spaces, under the hypothesis that the initial data $(u_{0},E_{0})$ lies in the energy space, and that the initial magnetic field $B_0$ is sufficient small in $H^{s}(\mathbb{R}^{3})$ with $s\in \left[\frac{1}{2},\frac{3}{2}\right)$. For the initial data without any restriction, they also built up the global existence of weak solutions to \eqref{NSM} in $\mathbb{R}^{2}$. In particular, the result in two-dimensional space improves the asymptotic limit established in \cite{AIM} and the global well-posedness result obtained in \cite{MN}.
See also \cite{IY,GISY} and the references therein for more well-posedness results of the system \eqref{NSM}.

There have been many research results for the Hall-MHD equations.  Acheritogaray-Degond-Frouvelle-Liu \cite{ADFL} proposed a framework of deriving the Hall-MHD equations from the two-fluids Euler-Maxwell system for electrons and ions (see also \cite{SS}) and proved the global existence of weak solutions in the periodic domain.
Chae-Degond-Liu \cite{CDL} obtained the global existence of weak solutions and the local well-posedness of classical solutions in the three-dimensional whole space.
They also established the blow-up criterion and the global existence of classical solutions for small initial data. Later, Chae-Lee \cite{CL}
generalized the results of \cite{CDL}. Chae-Wolf \cite{CW} investigated the  partial regularity of the weak solutions of the three-dimensional Hall-MHD equations
on the plane and obtained that the space-time Hausdorff dimension of the set of possible singularities for a weak solution is at most 2. Chae-Schonbek \cite{CS}
established the optimal time decay rate of weak solutions in the three-dimensional whole space.  Chae-Weng \cite{CW} considered
the singularity formation for the Hall-MHD equations without resistivity in $\mathbb{R}^3$. Dumas-Sueur \cite{DS} investigated energy conservation of
weak solutions to the three-dimensional Hall-MHD equations. Dai \cite{DM} established the regularity criterion of the 3D incompressible resistive viscous Hall-MHD equations.
Benvenutti-Ferreira \cite{BF} considered the stability of global large strong solutions. Very recently, Dai \cite{DM1} obtained the
non-uniqueness of the Leray-Hopf weak solution by using the convex integration scheme. For more results on the Hall-MHD equations, see \cite{DRG,FHN,PM,WZ}. For the generalized Hall-MHD equations, Dai-Liu \cite{DH} established the local well-posedness in the Besov space. Chae-Wan-Wu \cite{CWW} obtained the local well-posedness of the smooth solution of the Hall-MHD equations with fractional magnetic diffusion. We refer the readers to
\cite{DH1,JZ,PZ,WR,WZ1,WYT,YZ} and  the references therein for the well-posedness results and the regularity criteria of the generalized Hall-MHD equations.

Compared to the system \eqref{NSM}, there are very little researches have been done on \eqref{con:NSM}. The existence of weak solutions to \eqref{con:NSM} in energy space seems very challenging for lacking of weak stability of the Lorentz force $j\times B$ included both in generalized Ohm's law and the momentum conservation equation, which also reflects the difficulties we encounter in the asymptotic analysis of \eqref{con:NSM}. In the present paper, we impose the hypothesis of existence of the global and finite energy weak solutions to system \eqref{con:NSM}.

Before starting the main result of this paper, let us first introduce the notations and conventions used throughout this paper.

\textbf{Notations:}
\begin{enumerate}
  \item For any positive $A$ and $B$, we use the notation $A\lesssim B$ to mean that there exists a positive constant $C$ such that $A\leqslant CB$.
  \item The Fourier transform is defined by
  $$\mathcal{F}f(\xi)=\hat{f}(\xi):=\frac{1}{(2\pi)^{\frac{3}{2}}}\int_{\mathbb{R}^{3}}e^{-ix\cdot\xi}f(x)dx,$$
  and its inverse is defined by
  $$\mathcal{F}^{-1}g(x)=\check{g}(x):=\frac{1}{(2\pi)^{\frac{3}{2}}}\int_{\mathbb{R}^{3}}e^{ix\cdot\xi}g(\xi)d\xi.$$
  \item For every $p\in[1,\infty]$, we denote the norm in the Lebesgue space $L^{p}$ by $\left\|\cdot\right\|_{L^{p}}$.
  \item For any normed space $X$, we employ the notation $L^{p}\left([0,T],X\right)$ to denote the space of functions $f$ such that for almost all $t\in (0,T)$, $f(t)\in X$ and $\left\|f(t)\right\|_{X}\in L^{p}(0,T)$. We simply denote the notation $L^{p}\left([0,T],X\right)$ by $L^{p}_{T}X$.
  \item The homogeneous Sobolev space $\dot{H}^{s}(\mathbb{R}^{3})$, for any $s\in \mathbb{R}$, as the subspace of tempered distributions whose Fourier transform is locally integrable endowed with the norm
      $$\left\|f\right\|_{\dot{H}^{s}}=\left(\int_{\mathbb{R}^{3}}\left|\xi\right|^{2s}\left|\hat{f}(\xi)\right|^{2}d\xi\right)^{\frac{1}{2}}.$$
\end{enumerate}

Now, we are ready to state our main result in this paper.
\begin{Theorem}\label{Mian} Let $s\in \left(\frac{1}{2},1\right)$ be fixed. For any $\gamma>0$, we assume that $\left(u^{\gamma},E^{\gamma},B^{\gamma}\right)$ is the global and finite energy weak solution to the incompressible Navier-Stokes-Maxwell system with generalized Ohm's law \eqref{con:NSM} for some uniformly bounded initial data
$$\left(u^{0\gamma},E^{0\gamma},B^{0\gamma}\right)\in \left(H^{s}\times \left(L^{2}\right)^{2}\right)\left(\mathbb{R}^{3}\right),$$
with ${\rm div}u^{0\gamma}={\rm div}B^{0\gamma}=0$, and suppose that the initial data converges weakly in $H^{s}\times \left(L^{2}\right)^{2}$, as $\gamma\rightarrow 0$, to some quantity
$$\left(u^{0},E^{0},B^{0}\right)\in \left(H^{s}\times \left(L^{2}\right)^{2}\right)\left(\mathbb{R}^{3}\right),$$
with ${\rm div}u^{0}={\rm div}B^{0}=0.$
Besides, we also assume that
\begin{equation}\label{regularhp}
u^{\gamma}\in L_{t}^{2}\dot{H}_{x}^{1+s} \text{ are uniformly bounded},
\end{equation}
and that for any $\delta>0$,
\begin{equation}\label{higj}
\limsup\limits_{\gamma\rightarrow 0}\left\|j_{\gg}^{\gamma}\right\|_{L_{t,x,\rm loc}^{2}}=0,
\end{equation}
where
$$j_{\gg}^{\gamma}=\mathcal{F}^{-1}\left(\chi_{\left\{|\xi|\geq\phi\left(\frac{\gamma}{\delta}\right)\right\}}\mathcal{F}j^{\gamma}\right),\;\phi(\gamma)=\gamma^{\frac{2}{2s-3}}.$$
Then, taking $\gamma\rightarrow 0$, up to extraction of a subsequence, $(u^{\gamma},B^{\gamma})$ converges weakly to a global and finite energy weak solution
$(u,B)$ of the Hall-MHD equations \eqref{hallmhd} (a dimensionless version) with initial data $(u^{0},B^{0})$.
\end{Theorem}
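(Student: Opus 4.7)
The plan is to pass to the limit $\gamma\to 0$ in a suitable reformulation of \eqref{con:NSM} in which the magnetic field equation becomes a perturbed Hall-MHD system. From the energy identity \eqref{energy} together with assumption \eqref{regularhp}, I would first read off the uniform bounds
$$u^\gamma\in L^\infty_t L^2_x\cap L^2_t \dot H^1_x\cap L^2_t \dot H^{1+s}_x,\quad B^\gamma,\,\gamma E^\gamma\in L^\infty_t L^2_x,\quad j^\gamma\in L^2_{t,x},$$
extract subsequences so that $(u^\gamma,B^\gamma,\gamma E^\gamma,j^\gamma)$ admits weak (or weak-$\ast$) limits $(u,B,E^\ast,j)$, and observe that $\gamma^2\partial_t E^\gamma\to 0$ in $\mathcal D'$, whence Ampere's law passes to $j=\nabla\times B$ in the limit and the divergence-free conditions on $u$ and $B$ are preserved.

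Next I would take the curl of the generalized Ohm's law to eliminate $\nabla p_e^\gamma$, substitute Faraday's law for $\nabla\times E^\gamma$, and use the identity $\nabla\times j^\gamma=-\Delta B^\gamma+\gamma^2\partial_{tt}B^\gamma$ (which follows from Ampere and Faraday) to rewrite the magnetic equation as
$$\partial_t B^\gamma+\eta\,\nabla\times(j^\gamma\times B^\gamma)-\nabla\times(u^\gamma\times B^\gamma)=\beta\eta^2\,\Delta B^\gamma-\beta\eta^2\gamma^2\,\partial_{tt}B^\gamma.$$
This isolates the sole dispersive obstruction in the single term $\gamma^2\partial_{tt}B^\gamma$, which vanishes in $\mathcal D'$ as $\gamma\to 0$ because $\gamma(\gamma E^\gamma)\to 0$; what remains is exactly the Hall-MHD magnetic equation with $j=\nabla\times B$, provided one can pass to the limit in the nonlinear products on the left-hand side.

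To do so I would upgrade the weak convergences to strong compactness of both $u^\gamma$ and $B^\gamma$ in $L^2_{t,x,\mathrm{loc}}$. Strong compactness of $u^\gamma$ comes from Aubin-Lions applied to the momentum equation, using that every forcing term (including the Lorentz force $j^\gamma\times B^\gamma$, controlled only in $L^2_t L^1_x$) lies in a suitable negative-order Sobolev space; interpolation with $L^2_t\dot H^{1+s}_x$ then yields convergence in $L^2_t H^{s'}_{\mathrm{loc}}$ for any $s'<1+s$. For $B^\gamma$ I would split along the frequency threshold $|\xi|=\phi(\gamma/\delta)$: on low frequencies, combining the $L^\infty_t L^2_x$ bound, the $L^2_{t,x}$ bound on the low-frequency part of $j^\gamma$, and the time regularity supplied by the reformulated magnetic equation yields local strong compactness via Aubin-Lions; on high frequencies, the Ampere identity $j^\gamma_{\gg}=\nabla\times B^\gamma_{\gg}-\gamma^2\partial_t E^\gamma_{\gg}$ together with \eqref{higj} and the $L^\infty_tL^2_x$ bound on $\gamma E^\gamma$ makes $B^\gamma_{\gg}$ negligible after letting $\gamma\to 0$ and then $\delta\to 0$.

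The main obstacle will be this strong compactness of $B^\gamma$: unlike $u^\gamma$, it enjoys no parabolic smoothing uniform in $\gamma$, and its natural equation is hyperbolic with propagation speed $1/\gamma$, producing oscillations precisely above the dispersive scale $\phi(\gamma)=\gamma^{2/(2s-3)}$ that hypothesis \eqref{higj} is designed to kill. Once both strong compactnesses are in hand, the products $\mathrm{div}(u^\gamma\otimes u^\gamma)$, $\nabla\times(u^\gamma\times B^\gamma)$, $j^\gamma\times B^\gamma$, and $\nabla\times(j^\gamma\times B^\gamma)$ pass to the limit in $\mathcal D'$ (the last via the same low/high frequency split on $j^\gamma$); substituting $j=\nabla\times B$ in the reformulated equation then recovers the second equation of \eqref{hallmhd}, the momentum equation follows directly from the first equation of \eqref{con:NSM}, and convergence of the initial data is inherited from the hypothesis.
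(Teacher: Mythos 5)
Your overall architecture (energy bounds, weak limits, $\gamma^2\partial_tE^\gamma\to0$ so Ampere becomes $j=\nabla\times B$, curling Ohm's law to get a perturbed Hall--MHD equation, Aubin--Lions for $u^\gamma$, and reducing everything to strong compactness of $B^\gamma$ against the weak limit of $j^\gamma$) matches the paper's reduction. But the step on which the whole theorem hinges --- strong $L^2_{t,x,\mathrm{loc}}$ control of $B^\gamma$ below the threshold $\phi(\gamma/\delta)$ --- is not established by your argument, and as stated it fails. Aubin--Lions needs a \emph{uniform-in-$\gamma$} spatial compactness ingredient, and your low-frequency block is cut at $|\xi|\le\phi(\gamma/\delta)\to\infty$, so the truncation gives no uniform bound in any $H^\sigma$, $\sigma>0$. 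The attempt to recover spatial regularity from Ampere's law does not work either: $\nabla\times B^\gamma=j^\gamma+\gamma^2\partial_tE^\gamma$, and the term $\gamma^2\partial_tE^\gamma=\gamma\,\partial_t(\gamma E^\gamma)$ is \emph{not} uniformly bounded in $L^2_{t,x}$; it is only $O(\gamma)$ in a negative-time-regularity space, which makes it vanish in $\mathcal{D}'$ but does not prevent $B^\gamma$ from carrying $O(1)$ energy at frequencies between any fixed $R$ and $\phi(\gamma/\delta)$ with fast time oscillations (Maxwell waves of speed $1/\gamma$). Likewise, on the very high frequencies, \eqref{higj} controls $j^\gamma_{\gg}=\nabla\times B^\gamma_{\gg}-\gamma^2\partial_tE^\gamma_{\gg}$, not $B^\gamma_{\gg}$ itself, so you cannot conclude $B^\gamma_{\gg}$ is negligible; the paper instead never estimates $B^\gamma_{\gg}$ but moves the Fourier cutoff by duality onto $j^\gamma\varphi$ and applies \eqref{higj} there, pairing with the $L^\infty_tL^2_x$ bound on $B^\gamma$.

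The missing mechanism is precisely the content of Sections 2--3 of the paper: a spectral analysis of the damped Maxwell operator
$\left(\begin{smallmatrix}-\frac{1}{\beta\eta^2\gamma^2}\mathrm{Id}& \frac{1}{\gamma}\nabla\times\\ -\frac{1}{\gamma}\nabla\times&0\end{smallmatrix}\right)$
(Lemmas \ref{Lemma 2.1} and \ref{lemma3.2}), a Duhamel representation of $\hat B^\gamma$ in which two integrations by parts in time remove both $\hat B^\gamma$ and $\partial_t\hat u^\gamma$ from the source (formulas \eqref{B1}--\eqref{B2}), and a five-zone frequency decomposition $B^\gamma_{\ll}+B^\gamma_{<}+B^\gamma_{\sim}+B^\gamma_{>}+B^\gamma_{\gg}$. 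The eigenvalue $\lambda_+(\xi)\sim-\beta\eta^2|\xi|^2$ supplies an effective parabolic smoothing on $R<|\xi|\lesssim 1/\gamma$ and $\mathrm{Re}\,\lambda_\pm=-\tfrac{1}{2\beta\eta^2\gamma^2}$ supplies strong damping up to $\phi(\gamma/\delta)$; this is what yields $\limsup_\gamma\|B^\gamma_{<}\|\lesssim R^{1/2-s}$, $\|B^\gamma_{\sim}\|\to0$, $\limsup_\gamma\|B^\gamma_{>}\|\lesssim\delta$, and the time-equicontinuity of $B^\gamma_{\ll}$ at fixed $R$. Note that these estimates are exactly where the hypotheses $s\in(\tfrac12,1)$, $u^\gamma\in L^2_t\dot H^{1+s}_x$, and the specific exponent $\phi(\gamma)=\gamma^{2/(2s-3)}$ enter; your proposal uses none of them quantitatively, which is a symptom that the core of the proof is absent. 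If you want to avoid the semigroup formalism you would still need an equivalent quantitative frequency-localized damping estimate for the $(E^\gamma,B^\gamma)$ system driven by Ohm's law; soft compactness arguments alone cannot close the intermediate-frequency range.
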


Now, we sketch the strategy of proving this theorem and point out some of the main difficult and techniques involved in the process. The convergence of the linear terms in \eqref{con:NSM} is easy to handle with thanks to the formal energy conservation law. Furthermore, the dissipation on $u$ is clearly sufficient to establish the weak stability of the nonlinear terms $u\times B$ and ${\rm div}(u\otimes u)$. Thus, in order to prove the asymptotic limit of \eqref{con:NSM} rigorously, it is sufficient to establish the convergence of the Lorentz force $j^{\gamma}\times B^{\gamma}$ towards $j\times B$, in the sense of distribution. To this end, inspired by \cite{AIM}, we will take a careful analysis of the frequency distribution of magnetic field $B^{\gamma}$, which is based on the spectral properties of Maxwell operator. Unfortunately, it seems hopeless to prove that the operator $$\left(
\begin{array}{cc}
-\frac{{\bf Id}}{\beta\eta^{2}\gamma^{2}}&  \frac{1}{\gamma^{2}}\nabla \times  \\
-\nabla\times &  0 \\
\end{array}\right)$$ is the infinitesimal generator of some contraction semi-group, which means that applying Duhamel's formula may be impossible. Moreover, the modified  antisymmetric operators $$\left(
\begin{array}{cc}
-\frac{{\bf Id}}{\beta\eta^{2}\gamma^{2}}&  \frac{1}{\gamma^{2}}\nabla \times  \\
-\frac{1}{\gamma^{2}}\nabla\times &  0 \\
\end{array}\right) \text{ and } \left(
\begin{array}{cc}
-\frac{{\bf Id}}{\beta\eta^{2}\gamma^{2}}&  \nabla \times  \\
-\nabla\times &  0 \\
\end{array}\right),$$ both of them can be the infinitesimal generator of some contraction semi-group. However, the analysis of the frequencies of $B^{\gamma}$ seems difficult since the tricky term $B^{\gamma}$ in the right-hand side can't be canceled out according to the Duhamel's formula. Luckily, the same problem doesn't trouble us when we choose the modified antisymmetric operator as $$\left(
\begin{array}{cc}
-\frac{{\bf Id}}{\beta\eta^{2}\gamma^{2}}&  \frac{1}{\gamma}\nabla \times  \\
-\frac{1}{\gamma}\nabla\times &  0 \\
\end{array}\right)$$ (see \eqref{Maxwell operator} in Section \ref{sec2}), since the term $B^{\gamma}$ in the right-hand side will vanish after integrating by parts (see \eqref{B1} in Section \ref{sec3}). Furthermore, the systems we are dealing with are more complex and require more sophisticated analysis and energy estimates.

The rest of the paper is arranged as follows. In Section \ref{sec2}, we establish spectral analysis of linear Maxwell system in \eqref{con:NSM} and two useful lemmas. We provide a rigorous proof of Theorem \ref{Mian} in Section \ref{sec3}.

\section{Spectral analysis}\label{sec2}
Now, let's move on to the detailed spectral analysis on the Maxwell operator.
Define
\begin{equation}\label{G}
\begin{cases}
G_{1}:=\left(\frac{1}{\gamma^{2}}-\frac{1}{\gamma}\right)\nabla\times B+\frac{1}{\beta\eta^{2}\gamma^{2}}P\left(\partial _tu+{\rm div}G_{4}-\Delta u-\frac{1}{\eta}G_{3}\right), \\
G_{2}:=\left(\frac{1}{\gamma}-1\right)\nabla\times E=\left(1-\frac{1}{\gamma}\right)\partial _tB, \\
G_{3}:=u\times B, \\
G_{4}:=u\otimes u,
\end{cases}
\end{equation}
where $P$ is the Leray projector onto divergence-free vector fields. Clearly, the Maxwell's system in \eqref{con:NSM} can be rewritten as
\begin{equation}\label{maxwell}
\begin{cases}
\partial _tE=-\frac{1}{\beta\eta^{2}\gamma^{2}}E+\frac{1}{\gamma}\nabla\times B+G_{1}, \\
\partial _tB=\frac{1}{\gamma}\nabla\times E+G_{2},\\
{\rm div}B=0,
\end{cases}
\end{equation}
or, equivalently,
\begin{equation}\label{max}
\partial _t \left(
\begin{array}{c}
E\\
B\\
\end{array}
\right)= {\bf A} \left(
\begin{array}{c}
E\\
B\\
\end{array}\right)
+\left(
\begin{array}{c}
G_{1}\\
G_{2}\\
\end{array}\right),
\end{equation}
where Maxwell's operator ${\bf A}$ is given by
\begin{equation}\label{Maxwell operator}
{\bf A}:=\left(
\begin{array}{cc}
-\frac{{\bf Id}}{\beta\eta^{2}\gamma^{2}}&  \frac{1}{\gamma}\nabla \times  \\
-\frac{1}{\gamma}\nabla\times &  0
\end{array}\right).
\end{equation}
More precisely, define
$$X:=\left\{\left(E,B\right)\in\left(L^{2}\left(\mathbb{R}^{3}\right)\right)^{2}: {\rm div} B = 0\right\},$$
$$\mathcal{D}\left(A\right):=\left\{\left(E,B\right)\in X:\left(PE,B\right)\in \left(H^{1}\left(\mathbb{R}^{3}\right)\right)^{2}\right\}\subset X,$$
then one can verify that the unbounded linear operator ${\bf A}:\mathcal{D}({\bf A})\rightarrow X$ is closed and that $\mathcal{D}({\bf A})$ is dense in $X$. Moreover, for any $\lambda > 0$, a direct computation shows that the operator
$$\lambda {\bf Id}-{\bf A}:\mathcal{D}({\bf A})\rightarrow X$$
is bijective and that $\left\| \left(\lambda {\bf Id}-{\bf A}\right)^{-1}\right\| \leqslant \frac {1}{\lambda}.$ In view of the Hille-Yosida theorem, there exists a contraction semigroup denoted by $\left\{e^{t{\bf A}}\right\}_{t\geqslant 0}$ with ${\bf A}$ being the infinitesimal generator of it. Therefore, for any initial data $(E^{0}, B^{0})\in\mathcal{D}({\bf A})$ and any inhomogeneous term $ \left(G_{1}, G_{2}\right)\in C^{1}\left(\left(\mathbb{R}^{+}\right),X\right),$ the solution to \eqref{max} can be given by Duhamel's formula:
\begin{equation}\label{duhamel}
\left(
\begin{array}{c}
E\\
B\\
\end{array}
\right)= e^{t{\bf A}} \left(
\begin{array}{c}
E^{0}\\
B^{0}\\
\end{array}\right)
+\int_{0}^{t}e^{(t-\tau){\bf A}}\left(
\begin{array}{c}
G_{1}\\
G_{2}\\
\end{array}\right)
(\tau)d\tau.
\end{equation}
Moreover, the Duhamel's formula \eqref{duhamel} remains well-defined provided $\left(G_{1}, G_{2}\right)\in L^{1}\left(0, T, X\right),$ $\left(E^{0}, B^{0}\right)\in X$. In fact, $e^{(t-\tau){\bf A}}\left(
\begin{array}{c}
G_{1}\\
G_{2}\\
\end{array}\right)
(\tau)$ is strongly measurable, since it is weakly measurable for any $t>0$, and $L^{2}\left(\mathbb{R}^{3}\right)$ is separable. Notice that
$$\tau\rightarrow \left\| e^{(t-\tau){\bf A}}\left(
\begin{array}{c}
G_{1}\\
G_{2}\\
\end{array}\right)(\tau)\right\|_{X}$$
is integrable, therefore
$$e^{(t-\tau){\bf A}}\left(
\begin{array}{c}
G_{1}\\
G_{2}\\
\end{array}\right)(\tau)$$ is Bochner integrable.
Furthermore, one can easily verify that \eqref{duhamel} is a weak solution of \eqref{max} through approximation arguments in this setting.

In order to conduct a spectral analysis of ${\bf A}$, taking the Fourier transform  on \eqref{duhamel}, we get
\begin{equation}\label{fmax}
\partial _t \left(
\begin{array}{c}
\hat{E}\\
\hat{B}\\
\end{array}
\right)= \hat{{\bf A}}(\xi) \left(
\begin{array}{c}
\hat{E}\\
\hat{B}\\
\end{array}\right)
+\left(
\begin{array}{c}
\hat{G_{1}}\\
\hat{G_{2}}\\
\end{array}\right),
\end{equation}
where
$$
\hat{{\bf A}}(\xi)=\left(
\begin{array}{cc}
-\frac{{\bf Id}}{\beta\eta^{2}\gamma^{2}}&  \frac{1}{\gamma}i\xi \times  \\
-\frac{1}{\gamma}i\xi\times &  0 \\
\end{array}\right).$$
More precisely, for every $\xi\in \mathbb{R}^{3}\setminus\{0\}$, following \cite{AIM}, we define the 5-dimensional vector subspace of $\mathbb{C}^{3}\times\mathbb{C}^{3}$ as:
$$\mathcal{E}(\xi):=\left\{(e, b)\in\mathbb{C}^{3}\times\mathbb{C}^{3}:\xi\cdot b=0\right\},$$
then $\hat{{\bf A}}(\xi):\mathcal{E}(\xi)\rightarrow \mathcal{E}(\xi)$ is a linear finite-dimensional operator.

Now, we are ready to give a detailed analysis on the properties of the semigroup.
\begin{Lemma}\label{Lemma 2.1} For any $\xi\in \mathbb{R}^{3}\setminus\{0\}$, such that $|\xi|\neq\frac{1}{2\beta\eta^{2}\gamma}$, there are three different eigenvalues of $\hat{{\bf A}}(\xi)$ denoted by $\lambda_{0}=-\frac{1}{\beta\eta^{2}\gamma^{2}}$, $\lambda_{+}(\xi)$, $\lambda_{-}(\xi)$ with
\begin{equation}\label{lamuda}
\lambda_{\pm}(\xi)=\frac{-1\pm\sqrt{1-4\beta^{2}\eta^{4}\gamma^{2}|\xi|^{2}}}{2\beta\eta^{2}\gamma^{2}}.
\end{equation}
Additionally, the basis of the subspace $\mathcal{E}(\xi)$ can be made up by eigenvectors, which means $\hat{{\bf A}}(\xi)$ is diagonalizable, moreover, the eigenspaces corresponding to $\lambda_{0}$, $\lambda_{+}(\xi)$ and $\lambda_{-}(\xi)$ are respectively given by
\begin{align*}
\mathcal{E}_{0}(\xi)={\rm span}\left\{\left(
\begin{array}{c}
\xi\\
0 \\
\end{array}
\right)\right\},
\end{align*}
\begin{align}\label{lamuda+}
\mathcal{E}_{+}(\xi)&=\left\{\left(
\begin{array}{c}
e\\
-\frac{i}{\gamma\lambda_{+}}\xi\times e \\
\end{array}
\right)\in\mathbb{C}^{3}\times\mathbb{C}^{3}: e\in\mathbb{C}^{3},\;\;\xi\cdot e=0\right\}\notag\\
&=\left\{\left(
\begin{array}{c}
-\frac{i}{\gamma\lambda_{-}}\xi\times b\\
 b\\
\end{array}
\right)\in\mathbb{C}^{3}\times\mathbb{C}^{3}: b\in\mathbb{C}^{3},\;\;\xi\cdot b=0\right\},
\end{align}
\begin{align}\label{lamuda-}
\mathcal{E}_{-}(\xi)&=\left\{\left(
\begin{array}{c}
e\\
-\frac{i}{\gamma\lambda_{-}}\xi\times e \\
\end{array}
\right)\in\mathbb{C}^{3}\times\mathbb{C}^{3}: e\in\mathbb{C}^{3},\;\;\xi\cdot e=0\right\}\notag\\
&=\left\{\left(
\begin{array}{c}
-\frac{i}{\gamma\lambda_{+}}\xi\times b\\
 b\\
\end{array}
\right)\in\mathbb{C}^{3}\times\mathbb{C}^{3}:b\in\mathbb{C}^{3},\;\; \xi\cdot b=0\right\}.
\end{align}
For any $\xi\in \mathbb{R}^{3}\setminus\{0\}$ with $|\xi|=\frac{1}{2\beta\eta^{2}\gamma}$, there are two different eigenvalues of $\hat{{\bf A}}(\xi)$ denoted by $\lambda_{0}=-\frac{1}{\beta\eta^{2}\gamma^{2}}$, $\lambda_{1}=-\frac{1}{2\beta\eta^{2}\gamma^{2}}$, and the corresponding eigenspaces are respectively given by
\begin{align*}\mathcal{E}_{0}(\xi)={\rm span}\left\{\left(
\begin{array}{c}
\xi\\
0 \\
\end{array}
\right)\right\},
\end{align*}
\begin{align}\label{lamuda1}\mathcal{E}_{1}(\xi)&=\left\{\left(
\begin{array}{c}
e\\
-\frac{i}{\lambda_{1}\gamma}\xi\times e \\
\end{array}
\right)\in\mathbb{C}^{3}\times\mathbb{C}^{3}: e\in\mathbb{C}^{3},\;\;\xi\cdot e=0\right\}\notag\\
&=\left\{\left(
\begin{array}{c}
-\frac{i}{\lambda_{1}\gamma}\xi\times b\\
 b\\
\end{array}
\right)\in\mathbb{C}^{3}\times\mathbb{C}^{3} : b\in\mathbb{C}^{3},\;\;\xi\cdot b=0\right\}.
\end{align}
In this case, $\hat{{\bf A}}(\xi)$ is not diagonalizable.
\end{Lemma}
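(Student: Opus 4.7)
The plan is to reduce the spectral problem to a finite-dimensional linear algebra computation on the invariant subspace $\mathcal{E}(\xi)$ and to exploit the curl structure to separate the longitudinal mode (parallel to $\xi$) from the transverse modes (perpendicular to $\xi$). First I would verify that $\hat{\mathbf A}(\xi)$ indeed maps $\mathcal{E}(\xi)$ into itself: since $\xi\cdot(\xi\times v)=0$ for every $v\in\mathbb{C}^3$, the second component $-\frac{i}{\gamma}\xi\times e$ of the image automatically satisfies the constraint, so working on the five-dimensional space $\mathcal{E}(\xi)$ is legitimate.

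For the longitudinal eigenvalue, I would plug $(e,b)=(\xi,0)$ into $\hat{\mathbf A}(\xi)$ and read off $\hat{\mathbf A}(\xi)(\xi,0)=(-\frac{1}{\beta\eta^2\gamma^2}\xi,\,0)$, which immediately yields $\lambda_0=-\frac{1}{\beta\eta^2\gamma^2}$ with one-dimensional eigenspace $\mathcal{E}_0(\xi)=\mathrm{span}\{(\xi,0)\}$. The remaining four-dimensional transverse piece consists of pairs with $\xi\cdot e=\xi\cdot b=0$. On this piece the eigenvalue equation $\hat{\mathbf A}(\xi)(e,b)=\lambda(e,b)$ becomes the coupled system
\begin{equation*}
-\tfrac{1}{\beta\eta^2\gamma^2}e+\tfrac{i}{\gamma}\xi\times b=\lambda e,\qquad -\tfrac{i}{\gamma}\xi\times e=\lambda b.
\end{equation*}
For $\lambda\neq 0$ I would solve the second relation for $b=-\frac{i}{\gamma\lambda}\xi\times e$, substitute into the first, and use the vector identity $\xi\times(\xi\times e)=(\xi\cdot e)\xi-|\xi|^2 e=-|\xi|^2 e$. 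After cleaning up this reduces to the scalar quadratic $\lambda^2+\tfrac{1}{\beta\eta^2\gamma^2}\lambda+\tfrac{|\xi|^2}{\gamma^2}=0$, whose roots are exactly $\lambda_\pm(\xi)$ of \eqref{lamuda}. For each of $\lambda_\pm$ the eigenspace is parametrised by an arbitrary $e\in\mathbb{C}^3$ with $\xi\cdot e=0$, hence two-dimensional; summing dimensions gives $1+2+2=5=\dim\mathcal{E}(\xi)$, so $\hat{\mathbf A}(\xi)$ is diagonalisable whenever the discriminant is nonzero.

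To obtain the alternative parametrisation in terms of $b$ appearing in \eqref{lamuda+}--\eqref{lamuda-}, I would show that if $b=-\tfrac{i}{\gamma\lambda_+}\xi\times e$ then symmetrically $e=-\tfrac{i}{\gamma\lambda_-}\xi\times b$; the consistency of the two forms reduces to the identity $\lambda_+\lambda_-=|\xi|^2/\gamma^2$, which is simply Vieta's product-of-roots formula for the quadratic above. Finally, the critical frequency $|\xi|=\frac{1}{2\beta\eta^2\gamma}$ is exactly where the discriminant $1-4\beta^2\eta^4\gamma^2|\xi|^2$ vanishes, so $\lambda_\pm$ coalesce into $\lambda_1=-\frac{1}{2\beta\eta^2\gamma^2}$; the same computation shows the corresponding eigenspace is still only two-dimensional (given by \eqref{lamuda1}), so together with $\mathcal{E}_0$ one recovers only a three-dimensional eigenspace in a five-dimensional ambient space, proving that $\hat{\mathbf A}(\xi)$ is \emph{not} diagonalisable at that frequency.

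The computation is essentially routine; the only mildly delicate point is the double parametrisation of $\mathcal{E}_\pm(\xi)$, which must be carefully justified using the Vieta relation so that the formulas in \eqref{lamuda+}--\eqref{lamuda-} are internally consistent. No genuine obstacle is expected.
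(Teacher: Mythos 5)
Your proposal is correct and follows essentially the same route as the paper: write the eigenvalue equations for $\hat{\bf A}(\xi)$, separate the longitudinal mode $(\xi,0)$ with eigenvalue $\lambda_0$, and reduce the transverse problem via $b=-\frac{i}{\gamma\lambda}\xi\times e$ and the identity $\xi\times(\xi\times e)=-|\xi|^2 e$ to the quadratic $\beta\eta^2\gamma^2\lambda^2+\lambda+\beta\eta^2|\xi|^2=0$, whose roots are $\lambda_\pm(\xi)$. In fact you supply details the paper leaves implicit (the Vieta relation $\lambda_+\lambda_-=|\xi|^2/\gamma^2$ justifying the dual parametrisation of $\mathcal{E}_\pm(\xi)$, and the eigenspace dimension count $1+2=3<5$ showing non-diagonalisability at $|\xi|=\frac{1}{2\beta\eta^2\gamma}$), so no changes are needed.
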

\begin{proof}
Suppose
$$\lambda\left(
\begin{array}{c}
e\\
b\\
\end{array}
\right)= \hat{{\bf A}}(\xi) \left(
\begin{array}{c}
e\\
b\\
\end{array}\right),$$
where $\lambda$ and
$\left(
\begin{array}{c}
e\\
b\\
\end{array}
\right)$
are eigenvalue and eigenvector of $\hat{{\bf A}}(\xi)$, respectively. Then we have
\begin{equation}\label{q1}
\begin{cases}
\left(\lambda+\frac{1}{\beta\eta^{2}\gamma^{2}}\right)e=\frac{i}{\gamma}\xi\times b,\\
\lambda b=-\frac{i}{\gamma}\xi\times e.
\end{cases}
\end{equation}
Firstly, $e\neq 0$, otherwise $b=0$. Notice that $\xi\cdot b=0$, then $b=0$ and $\xi\times e=0$ provided $\lambda=-\frac{1}{\beta\eta^{2}\gamma^{2}}$. Thus $\lambda=-\frac{1}{\beta\eta^{2}\gamma^{2}}$ is an eigenvalue of $\hat{{\bf A}}(\xi)$ with the corresponding eigenspace given by
$\mathcal{E}_{0}(\xi)={\rm span}\left\{\left(
\begin{array}{c}
\xi\\
0 \\
\end{array}
\right)\right\}.$

Now, setting $\lambda\neq-\frac{1}{\beta\eta^{2}\gamma^{2}}$, multiplying \eqref{q1} by $\xi$, we have
$$\xi\cdot e=0, \;\; \beta\eta^{2}\gamma^{2}\lambda^{2}+\lambda +\beta\eta^{2}\xi^{2}=0,$$
whose roots $\lambda_{+}(\xi)$ and $\lambda_{-}(\xi)$ are exactly given by \eqref{lamuda}.

While $|\xi|=\frac{1}{2\beta\eta^{2}\gamma}$, $\lambda_{+}(\xi)=\lambda_{-}(\xi)=-\frac{1}{2\beta\eta^{2}\gamma^{2}}$ and the corresponding eigenspace $\mathcal{E}_{1}(\xi)$ is defined by \eqref{lamuda1}, then one can verify that the operator $\hat{{\bf A}}(\xi)$ is not diagonalizable in this condition.

Next, setting $|\xi|\neq\frac{1}{2\beta\eta^{2}\gamma}$, $\lambda_{+}(\xi)$ and $\lambda_{-}(\xi)$ are different. The corresponding  eigenspaces $\mathcal{E}_{+}(\xi)$ and $\mathcal{E}_{-}(\xi)$ are given by \eqref{lamuda+} and \eqref{lamuda-}, respectively. Therefore, $\hat{{\bf A}}(\xi)$ is diagonalizable in this case. Furthermore, these eigenspaces are not orthogonal to each other since the operator $\hat{{\bf A}}(\xi)$ is not symmetric. We complete the proof of Lemma \ref{Lemma 2.1}.
\end{proof}
\begin{Lemma}\label{lemma3.2}
Suppose that $1< K <2$ is a fixed constant. For any $\xi\in\mathbb{R}^{3}\setminus\{0\}$, $\lambda_{\pm}(\xi)$ are the eigenvalues of $\hat{\bf A}(\xi)$ defined by \eqref{lamuda}, then we have the following estimates.

If $|\xi|\leqslant \frac{1}{2\beta\eta^{2}\gamma}$,
$$-2\beta\eta^{2}|\xi|^{2}\leqslant\lambda_{+}(\xi)\leqslant-\beta\eta^{2}|\xi|^{2},\;\;
-\frac{1}{\beta\eta^{2}\gamma^{2}}\leqslant\lambda_{-}(\xi)\leqslant
-\frac{1}{2\beta\eta^{2}\gamma^{2}},$$
moreover, if $|\xi|\leqslant \frac{1}{2K\beta\eta^{2}\gamma}$,
$$\left|\frac{\lambda_{-}(\xi)}{\lambda_{-}(\xi)-\lambda_{+}(\xi)}\right|\leqslant\frac{K}{\sqrt{K^{2}-1}},\;\;
\left|\frac{\lambda_{+}(\xi)}{\lambda_{-}(\xi)-\lambda_{+}(\xi)}\right|\lesssim \gamma^{2}|\xi|^{2}.$$

If $|\xi|> \frac{1}{2\beta\eta^{2}\gamma}$,
$$\left|\lambda_{+}(\xi)\right|=\left|\lambda_{-}(\xi)\right|=\frac{\left|\xi\right|}{\gamma},
\;\;\Re\left(\lambda_{+}(\xi)\right)=\Re\left(\lambda_{-}(\xi)\right)=-\frac{1}{2\beta\eta^{2}\gamma^{2}},$$
moreover, if  $|\xi|\geqslant \frac{K}{2\beta\eta^{2}\gamma}$,
$$\left|\frac{\lambda_{\pm}(\xi)}{\lambda_{-}(\xi)-\lambda_{+}(\xi)}\right|\leqslant\frac{K}{2\sqrt{K^{2}-1}},\;\;
\left|\frac{1}{\lambda_{-}(\xi)-\lambda_{+}(\xi)}\right|\leqslant \frac{K}{2\sqrt{K^{2}-1}}\frac{\gamma}{\left|\xi\right|}.$$
\end{Lemma}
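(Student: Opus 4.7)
The plan is to introduce the dimensionless parameter $y:=4\beta^{2}\eta^{4}\gamma^{2}|\xi|^{2}$, which bundles the wave number and the scaling parameter, so that \eqref{lamuda} reads $\lambda_{\pm}(\xi)=(-1\pm\sqrt{1-y})/(2\beta\eta^{2}\gamma^{2})$. I then split the analysis at the threshold $y=1$, equivalently $|\xi|=\frac{1}{2\beta\eta^{2}\gamma}$, where the discriminant changes sign and the two branches collide; this matches exactly the two regimes appearing in the statement, so each inequality only has to be handled on the correct side of that threshold.

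In the low-frequency regime $y\leqslant 1$, both roots are real. The envelope $-2\beta\eta^{2}|\xi|^{2}\leqslant\lambda_{+}(\xi)\leqslant-\beta\eta^{2}|\xi|^{2}$ is not visible from the defining formula directly because of the near-cancellation between $-1$ and $\sqrt{1-y}$, so the central trick is to rationalize via $1-\sqrt{1-y}=y/(1+\sqrt{1-y})$, which rewrites $\lambda_{+}(\xi)=-2\beta\eta^{2}|\xi|^{2}/(1+\sqrt{1-y})$; the claimed bounds follow at once from $1+\sqrt{1-y}\in[1,2]$. The bounds on $\lambda_{-}$ are immediate from $-2\leqslant-1-\sqrt{1-y}\leqslant-1$. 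Under the tighter hypothesis $y\leqslant 1/K^{2}$ one has $\sqrt{1-y}\geqslant\sqrt{K^{2}-1}/K$, and inserting $\lambda_{-}-\lambda_{+}=-\sqrt{1-y}/(\beta\eta^{2}\gamma^{2})$ into the two quotients in question reduces them to elementary functions of $y$ and $\sqrt{1-y}$; the bound on $|\lambda_{+}/(\lambda_{-}-\lambda_{+})|$ again uses the same rationalization to pull out the factor $y\simeq\gamma^{2}|\xi|^{2}$.

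In the high-frequency regime $y>1$ the eigenvalues become complex conjugates, $\lambda_{\pm}(\xi)=(-1\pm i\sqrt{y-1})/(2\beta\eta^{2}\gamma^{2})$, so the identities $|\lambda_{\pm}(\xi)|=|\xi|/\gamma$ and $\Re\lambda_{\pm}(\xi)=-\frac{1}{2\beta\eta^{2}\gamma^{2}}$ drop out of a direct computation of modulus and real part. For the tighter quotient estimates under $y\geqslant K^{2}$, I would use the elementary inequality $y-1\geqslant y(1-1/K^{2})$, hence $\sqrt{y-1}\geqslant\sqrt{y}\sqrt{K^{2}-1}/K$; combined with $|\lambda_{-}-\lambda_{+}|=\sqrt{y-1}/(\beta\eta^{2}\gamma^{2})$ and $\sqrt{y}=2\beta\eta^{2}\gamma|\xi|$, this yields both of the remaining bounds in a single stroke (the $\gamma/|\xi|$ factor in the second one coming from replacing $1/\sqrt{y-1}$ by $1/\sqrt{y}$ up to the constant $K/\sqrt{K^{2}-1}$).

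No serious analytic difficulty is anticipated: the whole statement is a packet of sharp algebraic inequalities for the two roots of the characteristic polynomial $\beta\eta^{2}\gamma^{2}\lambda^{2}+\lambda+\beta\eta^{2}|\xi|^{2}=0$. The only calculational care required is to rewrite $1-\sqrt{1-y}$ through its conjugate in the low-frequency analysis so that the factor $|\xi|^{2}$ (rather than $1/\gamma^{2}$) is correctly extracted in the bound on $\lambda_{+}$, and to keep track of the regime in which the eigenvalues are real versus complex conjugate so that each inequality is interpreted with the right meaning of $|\lambda_{\pm}|$ and $\Re\lambda_{\pm}$.
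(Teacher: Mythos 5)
Your proposal is correct and follows essentially the same route as the paper: rationalizing $1-\sqrt{1-4\beta^{2}\eta^{4}\gamma^{2}|\xi|^{2}}$ by its conjugate to extract the factor $|\xi|^{2}$ in the low-frequency bounds, a direct modulus/real-part computation in the complex-conjugate regime, and the same elementary threshold inequalities in $K$ for the quotient estimates. The substitution $y=4\beta^{2}\eta^{4}\gamma^{2}|\xi|^{2}$ is only a notational repackaging of the paper's computation.
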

\begin{proof} We first consider the case $|\xi|\leqslant \frac{1}{2\beta\eta^{2}\gamma}$. Note that
\begin{align*}
\lambda_{+}(\xi)&=\frac{-1+\sqrt{1-4\beta^{2}\eta^{4}\gamma^{2}|\xi|^{2}}}{2\beta\eta^{2}\gamma^{2}}
=\frac{-4\beta^{2}\eta^{4}\gamma^{2}|\xi|^{2}}{2\beta\eta^{2}\gamma^{2}\left(1+\sqrt{1-4\beta^{2}\eta^{4}\gamma^{2}|\xi|^{2}}\right)}\\
&=\frac{-2\beta\eta^{2}|\xi|^{2}}{1+\sqrt{1-4\beta^{2}\eta^{4}\gamma^{2}|\xi|^{2}}},
\end{align*}
then we have $-2\beta\eta^{2}|\xi|^{2}\leqslant\lambda_{+}(\xi)\leqslant-\beta\eta^{2}|\xi|^{2}$. The similar estimate on $\lambda_{-}(\xi)$ is trivial.
Furthermore, one has
\begin{align*}
\left|\frac{\lambda_{-}(\xi)}{\lambda_{-}(\xi)-\lambda_{+}(\xi)}\right|
=\frac{1+\sqrt{1-4\beta^{2}\eta^{4}\gamma^{2}|\xi|^{2}}}{2\sqrt{1-4\beta^{2}\eta^{4}\gamma^{2}|\xi|^{2}}}
\leqslant \frac{1}{\sqrt{1-4\beta^{2}\eta^{4}\gamma^{2}|\xi|^{2}}}
\leqslant\frac{K}{\sqrt{K^{2}-1}},
\end{align*}
\begin{align*}
\left|\frac{\lambda_{+}(\xi)}{\lambda_{-}(\xi)-\lambda_{+}(\xi)}\right|
&=\frac{1-\sqrt{1-4\beta^{2}\eta^{4}\gamma^{2}|\xi|^{2}}}{2\sqrt{1-4\beta^{2}\eta^{4}\gamma^{2}|\xi|^{2}}}\\
&=\frac{2\beta^{2}\eta^{4}\gamma^{2}|\xi|^{2}}{\sqrt{1-4\beta^{2}\eta^{4}\gamma^{2}|\xi|^{2}}\left(1+\sqrt{1-4\beta^{2}\eta^{4}\gamma^{2}|\xi|^{2}}\right)}\\
&\lesssim \gamma^{2}|\xi|^{2},
\end{align*}
provided $|\xi|\leqslant \frac{1}{2K\beta\eta^{2}\gamma}$.

Next, focusing on the case $|\xi|> \frac{1}{2\beta\eta^{2}\gamma}$, writing
$$\lambda_{\pm}(\xi)=\frac{1}{2\beta\eta^{2}\gamma^{2}}\left(-1\pm i\sqrt{4\beta^{2}\eta^{4}\gamma^{2}|\xi|^{2}-1}\right),$$
we get
$$\left|\lambda_{+}(\xi)\right|=\left|\lambda_{-}(\xi)\right|=\frac{|\xi|}{\gamma},\;\;\Re\left(\lambda_{+}(\xi)\right)
=\Re\left(\lambda_{-}(\xi)\right)=-\frac{1}{2\beta\eta^{2}\gamma^{2}}.$$
Furthermore, assuming $|\xi|\geqslant \frac{K}{2\beta\eta^{2}\gamma}$, one has
$$\left|\frac{\lambda_{\pm}(\xi)}{\lambda_{-}(\xi)-\lambda_{+}(\xi)}\right|
=\frac{\beta\eta^{2}\gamma^{2}}{\sqrt{4\beta^{2}\eta^{4}\gamma^{2}|\xi|^{2}-1}}\frac{|\xi|}{\gamma}
=\frac{1}{\sqrt{4-\frac{1}{\beta^{2}\eta^{4}\gamma^{2}|\xi|^{2}}}}
\leqslant\frac{K}{2\sqrt{{K^{2}-1}}},$$
and the last estimate is followed.
\end{proof}
\section{Proof of the main result}\label{sec3}
In this section, we study the asymptotic limit of the incompressible Naiver-Stokes-Maxwell system with generalized Ohm's law \eqref{con:NSM} based on the spectral analysis established in Section \ref{sec2}.

Here, we consider a family of global and finite energy weak solutions $(u^{\gamma},E^{\gamma},B^{\gamma})$  to \eqref{con:NSM} with uniformly bounded initial data:
$$\left(u^{0\gamma},E^{0\gamma},B^{0\gamma}\right)\in L^{2}(\mathbb{R}^{3})\times L^{2}(\mathbb{R}^{3})\times L^{2}(\mathbb{R}^{3}).$$
Recall that the formal energy conservation law \eqref{energy} implies that the following uniform bounds of the weak solutions $(u^{\gamma},E^{\gamma},B^{\gamma})$:
$$u^{\gamma}\in L^{\infty}_{t}L^{2}_{x}\cap L^{2}_{t}\dot{H}^{1}_{x},\; \left(B^{\gamma},\gamma E^{\gamma}\right)\in L^{\infty}_{t}L^{2}_{x},\;j^{\gamma}\in L^{2}_{t,x}.$$
Therefore, up to extraction of subsequences, we have the following weak convergences as $\gamma\rightarrow0:$
\begin{align*}
\left(u^{0\gamma},E^{0\gamma},B^{0\gamma}\right)& \rightharpoonup \left(u^{0},E^{0},B^{0}\right) \text{ in } L^{2}_{x},\\
\left(u^{\gamma}, B^{\gamma}\right)& \rightharpoonup^{\ast} (u,B) \text{ in } L^{\infty}_{t}L^{2}_{x},\\
j^{\gamma}& \rightharpoonup j \text{ in } L^{2}_{t,x}.
\end{align*}
From the Faraday's equation in \eqref{con:NSM}, we know that
$$E^{\gamma}=-\partial_{t}A^{\gamma},\text{ where } \;\nabla\times A^{\gamma}=B^{\gamma}\text{ and } {\rm div}A^{\gamma}=0,$$
which implies $E^{\gamma}$ enjoys some kind of uniform weak bound. Thus, the weak stability of linear terms in \eqref{con:NSM} is deduced. Furthermore, combining the generalized Ohm's law and the momentum conservation equation in \eqref{con:NSM}, we find that
\begin{equation}\label{partialtu}
\partial_{t}u^{\gamma}\;{\rm is\; uniformly\; bounded\;in}\;L^{2}_{t,\rm loc}H^{-3}_{x}.
\end{equation}
By virtue of the Sobolev embedding and the regularity hypothesis \eqref{regularhp}, $u^{\gamma}$ is uniformly bounded in
$$L_{t}^{\infty}L_{x}^{2}\cap L_{t}^{2}\dot{H}_{x}^{1}\cap L_{t}^{2}\dot{H}_{x}^{1+s}\subset L_{t}^{\infty}L_{x}^{2}\cap L_{t}^{2}L_{x}^{6}\cap L_{t}^{2}L^{\infty}_{x},$$
then a routine application of classical compactness result by Aubin and Lions \cite{SJ} implies that $u$ is relatively compact in the strong topology of $L^{2}_{t,x,\rm loc}$. Inserting the bounds on $B^{\gamma}$, we have
\begin{align}\label{bddg3g4}
u^{\gamma}\times B^{\gamma}\in L_{t}^{2}L_{x}^{2},\;\;u^{\gamma}\otimes u^{\gamma}\in L_{t}^{2}L_{x}^{2},\;\;\nabla \left(u^{\gamma}\otimes u^{\gamma}\right)\in L_{t}^{2}L_{x}^{\frac{3}{3-s}},
\end{align}
whence,
\begin{align}\label{wconvg3g4}
G^{\gamma}_{3}:=u^{\gamma}\times B^{\gamma}\rightharpoonup G_{3}:=u\times B\;\;{\rm in} \;\;L^{2}_{t,x},\notag\\
G^{\gamma}_{4}:=u^{\gamma}\otimes u^{\gamma}\rightharpoonup G_{4}:=u\otimes u\;\;{\rm in} \;\;L^{2}_{t,x}.
\end{align}

From the Faraday's law, one has
$$\partial_{t}B^{\gamma}=\nabla\times G_{3}^{\gamma}-\beta\eta^{2}\nabla\times j^{\gamma}-\eta\nabla\times\left(j^{\gamma}\times B^{\gamma}\right) ,$$
 then $B^{\gamma}$ is relatively compact in $C([0,T], H^{-1})$ through standard compactness arguments.
Therefore, taking the weak limit in the Faraday's law, one obtains that $B\in L^{\infty}_{t}L^{2}_{x}$ solves
\begin{equation*}
\partial_{t}B=\beta\eta^{2}\Delta B+\nabla\times G_{3}-\eta\nabla\times\left(\partial_{t}u+{\rm div}G_{4}-\Delta u\right)
\end{equation*}
with the initial data $B^{0}\in L^{2}_{x}$.
Finally, we have from Duhamel's principle that:
$$B=e^{\beta\eta^{2}\Delta t}B^{0}+\int_{0}^{t}e^{\beta\eta^{2}\Delta (t-\tau)}\left[\nabla\times G_{3}-\eta\nabla\times\left(\partial_{\tau}u+{\rm div}G_{4}-\Delta u\right)\right]d\tau,$$
then, after taking Fourier transformation, we have
\begin{equation}\label{B}
\hat{B}=e^{-\beta\eta^{2}|\xi|^{2} t}\hat{B}^{0}+\int_{0}^{t}e^{-\beta\eta^{2}|\xi|^{2} (t-\tau)}\left[i\xi\times \hat{G}_{3}-\eta i\xi\times\left(\partial_{\tau}\hat{u}+i\xi\cdot \hat{G}_{4}+|\xi|^{2} u\right)\right]d\tau.
\end{equation}

Thus, in order to establish the asymptotic limit of the Navier-Stokes-Maxwell system \eqref{con:NSM}, there only remains to establish the weak stability of the Lorentz force $j^{\gamma}\times B^{\gamma}$, which will follow from a careful combination of frequency analysis on $B^{\gamma}$ and the hypothesis \eqref{higj} on the very high frequencies of $j$. Therefore, we decompose the initial data $(\hat{E}^{0\gamma},\hat{B}^{0\gamma})\in X$ and the inhomogeneous terms $(\hat{G}_{1}^{\gamma},\hat{G}_{2}^{\gamma})\in L^{1}(0,T,X)$ based on the spectral analysis in Section \ref{sec2}.

Applying Lemma \ref{Lemma 2.1}, for almost every $\xi\in \mathbb{R}^{3}$, one has
\begin{equation}\label{EB}
\left(
\begin{array}{c}
\hat{E}^{0\gamma}\\
\hat{B}^{0\gamma}\\
\end{array}
\right)=\left(
\begin{array}{c}
\frac{\hat{E}^{0\gamma}\cdot\xi}{|\xi|^{2}}\xi\\
0\\
\end{array}
\right)+\left(
\begin{array}{c}
-\frac{i}{\gamma\lambda_{-}}\xi\times b^{0\gamma}\\
 b^{0\gamma}\\
\end{array}
\right)+\left(
\begin{array}{c}
e^{0\gamma}\\
-\frac{i}{\gamma\lambda_{-}}\xi\times e^{0\gamma} \\
\end{array}
\right),
\end{equation}
or, equivalently,
\begin{equation}\label{eb}
\begin{cases}
se^{0\gamma}=\hat{E}^{0\gamma}+\frac{i}{\gamma \lambda_{-}}\xi\times \hat{B}^{0\gamma}, \\
sb^{0\gamma}=\hat{B}^{0\gamma}+\frac{i}{\gamma \lambda_{-}}\xi\times \hat{E}^{0\gamma},\\
\end{cases}
\end{equation}
where $s=\frac{\lambda_{-}(\xi)-\lambda_{+}(\xi)}{\lambda_{-}(\xi)}$, $\xi\cdot e^{0\gamma}=\xi\cdot b^{0\gamma}=0$. Therefore, according to Lemma \ref{lemma3.2}, we get
\begin{equation}\label{esteb}
\left|se^{0\gamma}\right|\leqslant \left|\hat{E}^{0\gamma}\right|+\left|\hat{B}^{0\gamma}\right|,\;\;
\left|sb^{0\gamma}\right|\leqslant\left|\hat{E}^{0\gamma}\right|+\left|\hat{B}^{0\gamma}\right|.
\end{equation}
Similarly, define
\begin{equation}\label{FGs}
\begin{cases}
\hat{G}_{1}^{\gamma}:=\frac{1}{\gamma^{2}}\left\{\left(1-\gamma\right)i\xi\times \hat{B}^{\gamma}+\frac{1}{\beta\eta}\left[\partial_{t}\hat{u}^{\gamma}+i\xi\cdot \hat{G}_{4}^{\gamma}+|\xi|^{2}\hat{u}^{\gamma}-\frac{1}{\eta}\hat{G}_{3}^{\gamma}\right]\right\},\\
\hat{G}_{2}^{\gamma}:=\left(1-\frac{1}{\gamma}\right)\partial_{t}\hat{B}^{\gamma},\\
\end{cases}
\end{equation}
and then applying Lemma \ref{Lemma 2.1}, one has
\begin{equation}\label{G1G2}
\left(
\begin{array}{c}
\hat{G}_{1}^{\gamma}\\
\hat{G}_{2}^{\gamma}\\
\end{array}
\right)=\left(
\begin{array}{c}
\frac{\hat{G}_{1}^{\gamma}\cdot\xi}{|\xi|^{2}}\xi\\
0\\
\end{array}
\right)+\left(
\begin{array}{c}
-\frac{i}{\gamma\lambda_{-}}\xi\times b^{\gamma}\\
 b^{\gamma}\\
\end{array}
\right)+\left(
\begin{array}{c}
e^{\gamma}\\
-\frac{i}{\gamma\lambda_{-}}\xi\times e^{\gamma} \\
\end{array}
\right),
\end{equation}
for almost every $\xi\in \mathbb{R}^{3}$, or, equivalently,
\begin{equation}\label{G1aG2}
\begin{cases}
se^{\gamma}=\hat{G}_{1}^{\gamma}+\frac{i}{\gamma \lambda_{-}}\xi\times \hat{G}_{2}^{\gamma}, \\
sb^{\gamma}=\hat{G}_{2}^{\gamma}+\frac{i}{\gamma \lambda_{-}}\xi\times \hat{G}_{1}^{\gamma},\\
\end{cases}
\end{equation}
where $\xi\cdot e^{\gamma}=\xi\cdot b^{\gamma}=0$.
 Hence, from Lemma \ref{lemma3.2}, we obtain
\begin{equation}\label{es}
\left|se^{\gamma}\right|\leqslant \left|\hat{G}_{1}^{\gamma}\right|+\left|\hat{G}_{2}^{\gamma}\right|,\;\;
\left|sb^{\gamma}\right|\leqslant \left|\hat{G}_{1}^{\gamma}\right|+\left|\hat{G}_{2}^{\gamma}\right|.
\end{equation}
Letting the operator $e^{t\hat{\bf A}}$ acts on \eqref{EB} and on \eqref{G1G2} respectively, combining Lemma \ref{Lemma 2.1}, then we have
\begin{equation*}
e^{t\hat{\bf A}}\left(
\begin{array}{c}
\hat{E}^{0\gamma}\\
\hat{B}^{0\gamma}\\
\end{array}
\right)=e^{-\frac{1}{\beta\eta^{2}\gamma^{2}}}\left(
\begin{array}{c}
\frac{\hat{E}^{0}\cdot\xi}{|\xi|^{2}}\xi\\
0\\
\end{array}
\right)+e^{t\lambda_{+}}\left(
\begin{array}{c}
-\frac{i}{\gamma\lambda_{-}}\xi\times b^{0\gamma}\\
 b^{0\gamma}\\
\end{array}
\right)+e^{t\lambda_{-}}\left(
\begin{array}{c}
e^{0\gamma}\\
-\frac{i}{\gamma\lambda_{-}}\xi\times e^{0\gamma}
\end{array}
\right),
\end{equation*}
and
\begin{equation*}
e^{t\hat{\bf A}}\left(
\begin{array}{c}
\hat{G}_{1}^{\gamma}\\
\hat{G}_{2}^{\gamma}\\
\end{array}
\right)=e^{-\frac{1}{\beta\eta^{2}\gamma^{2}}}\left(
\begin{array}{c}
\frac{\hat{G}_{1}^{\gamma}\cdot\xi}{|\xi|^{2}}\xi\\
0\\
\end{array}
\right)+e^{t\lambda_{+}}\left(
\begin{array}{c}
-\frac{i}{\gamma\lambda_{-}}\xi\times b^{\gamma}\\
 b^{\gamma}\\
\end{array}
\right)+e^{t\lambda_{-}}\left(
\begin{array}{c}
e^{\gamma}\\
-\frac{i}{\gamma\lambda_{-}}\xi\times e^{\gamma}
\end{array}
\right) .
\end{equation*}
Substituting them into \eqref{duhamel} implies that
\begin{equation*}
\hat{B}^{\gamma}=e^{t\lambda_{+}}b^{0\gamma}-\frac{i}{\gamma\lambda_{-}}e^{t\lambda_{-}}\xi\times e^{0\gamma}+\int_{0}^{t}e^{(t-\tau)\lambda_{+}}b^{\gamma}-\frac{i}{\gamma\lambda_{-}}e^{(t-\tau)\lambda_{-}}\xi\times e^{\gamma}d\tau.
\end{equation*}
Notice that
\begin{align*}-\frac{i}{\gamma\lambda_{-}}\xi\times e^{0\gamma}=\hat{B}^{0\gamma}-b^{0\gamma},\\
-\frac{i}{\gamma\lambda_{-}}\xi\times e^{\gamma}=\hat{G}_{2}^{\gamma}-b^{\gamma},\\
sb^{\gamma}=\hat{G}_{2}^{\gamma}+\frac{i}{\gamma \lambda_{-}}\xi\times \hat{G}_{1}^{\gamma},
\end{align*}
one has
\begin{align*}
\hat{B}^{\gamma}
&=e^{t\lambda_{-}}\hat{B}^{0\gamma}+\left(e^{t\lambda_{+}}-e^{t\lambda_{-}}\right)b^{0\gamma} +\int_{0}^{t}\left(e^{(t-\tau)\lambda_{+}}-e^{(t-\tau)\lambda_{-}}\right)b^{\gamma}+e^{(t-\tau)\lambda_{-}}\hat{G}_{2}^{\gamma}d\tau\\
&=e^{t\lambda_{-}}\hat{B}^{0\gamma}+\left(e^{t\lambda_{+}}-e^{t\lambda_{-}}\right)b^{0\gamma} +\int_{0}^{t}\left[\frac {\lambda_{-}}{\lambda_{-}-\lambda_{+}} e^{(t-\tau) \lambda_{+}}-\frac{\lambda_{+}}{\lambda_{-}-\lambda_{+}}e^{(t-\tau)\lambda_{-}}\right] \hat{G}_{2}^{\gamma}d \tau\\
&\quad+\frac{i}{\gamma (\lambda_{-}-\lambda_{+})}\int_{0}^{t}\left(e^{(t-\tau)\lambda_{+}}-e^{(t-\tau)\lambda_{-}}\right)\xi \times \hat{G}_{1}^{\gamma} d \tau .
\end{align*}
Combining \eqref{FGs} and integrating by parts in $t$ yields that
\begin{align}\label{B1}
\hat{B}^{\gamma}
&=\gamma e^{t\lambda_{-}}\hat{B}^{0\gamma}+\gamma(e^{t\lambda_{+}}-e^{t\lambda_{-}})b^{0\gamma}
+\frac{1-\gamma}{\lambda_{-}-\lambda_{+}}\left(\lambda_{-}e^{t\lambda_{+}}-\lambda_{+}e^{t\lambda_{-}}\right)\hat{B}^{0\gamma}\notag\\
&\quad +\frac{i}{\beta\eta\gamma^{2}(\lambda_{-}\!-\!\lambda_{+})}\int_{0}^{t}\left(e^{(t-\tau)\lambda_{+}}\!-\!e^{(t-\tau)\lambda_{-}}\right)\xi\times \left(\partial_{\tau}\hat{u}^{\gamma}+i\xi\cdot \hat{G}_{4}^{\gamma}+|\xi|^{2}\hat{u}^{\gamma}-\frac{1}{\eta}\hat{G}_{3}^{\gamma}\right)d\tau,
\end{align}
where the term $\hat{B}^{\gamma}$ in the right-hand side vanishes.
In order to cancel out the term $\partial_{t}\hat{u}^{\gamma}$, we integrate by parts in $t$ again to get
\begin{align}\label{B2}
\hat{B}^{\gamma}
&=\gamma e^{t\lambda_{-}}\hat{B}^{0\gamma}+\frac{1-\gamma}{\lambda_{-}-\lambda_{+}}\left(\lambda_{-}e^{t\lambda_{+}}-\lambda_{+}e^{t\lambda_{-}}\right)\hat{B}^{0\gamma}\notag\\
&\quad+\gamma\left(e^{t\lambda_{+}}-e^{t\lambda_{-}}\right)b^{0\gamma}
-\frac{i}{\beta\eta\gamma^{2}\left(\lambda_{-}\!-\!\lambda_{+}\right)}\left(e^{t\lambda_{+}}\!-\!e^{t\lambda_{-}}\right)\xi\times\hat{u}^{0\gamma}\notag\\
&\quad -\frac{i}{\beta\eta\gamma^{2}(\lambda_{-}\!-\!\lambda_{+})}\int_{0}^{t}(\lambda_{-}e^{(t-\tau)\lambda_{-}}\!
-\!\lambda_{+}e^{(t-\tau)\lambda_{+}})\xi\times\hat{u}^{\gamma}d\tau\notag\\
&\quad +\frac{i}{\beta\eta\gamma^{2}(\lambda_{-}-\lambda_{+})}\int_{0}^{t}\left(e^{(t-\tau)\lambda_{+}}-e^{(t-\tau)\lambda_{-}}\right)\xi\times \left(i\xi\cdot \hat{G}_{4}^{\gamma}+|\xi|^{2}\hat{u}^{\gamma}-\frac{1}{\eta}\hat{G}_{3}^{\gamma}\right)d\tau.
\end{align}
Recall that we want to establish the convergence of Lorentz force $j^{\gamma}\times B^{\gamma}$ towards $j\times B$, in the sense of distribution. To this end, we strengthen the convergence of $B^{\gamma}$ towards $B$, which can be given by \eqref{B}. Whence, motivated by \cite{AIM}, we decompose $B^{\gamma}$ into five parts
$$B^{\gamma}=B^{\gamma}_{\ll}+B^{\gamma}_{<}+B^{\gamma}_{\sim}+B^{\gamma}_{>}+B^{\gamma}_{\gg},$$
where
\begin{align*}
&B^{\gamma}_{\ll}=\mathcal{F}^{-1}\left(\chi_{\left\{0\leqslant|\xi|\leqslant R\right\}}\hat{B}^{\gamma}\right),\\
&B^{\gamma}_{<}=\mathcal{F}^{-1}\left(\chi_{\left\{R<|\xi|\leqslant \frac{1}{2K\beta\eta^{2}\gamma}\right\}}\hat{B}^{\gamma}\right),\\
&B^{\gamma}_{\sim}=\mathcal{F}^{-1}\left(\chi_{\left\{\frac{1}{2K\beta\eta^{2}\gamma}<|\xi|\leqslant \frac{K}{2\beta\eta^{2}\gamma}\right\}}\hat{B}^{\gamma}\right),\\
&B^{\gamma}_{>}=\mathcal{F}^{-1}\left(\chi_{\left\{ \frac{K}{2\beta\eta^{2}\gamma}<|\xi|\leqslant \phi\left(\frac{\gamma}{\delta}\right)\right\}}\hat{B}^{\gamma}\right),\\
&B^{\gamma}_{\gg}=\mathcal{F}^{-1}\left(\chi_{\left\{|\xi|> \phi\left(\frac{\gamma}{\delta}\right)\right\}}\hat{B}^{\gamma}\right),
\end{align*}
for some fixed constant $1<K<2$, which will be determined later, for any large radius $0<R<\frac{1}{2K\beta\eta^{2}\gamma}$ and for some small parameter $\delta>0$.
Hence, for any $\varphi\in C_{c}^{\infty}\left(\mathbb{R}^{+}\times \mathbb{R}^{3}\right)$, we have
\begin{align}\label{decompose}
&\int_{\mathbb{R}^{+}\times \mathbb{R}^{3}}\left(j^{\gamma}\times B^{\gamma}-j\times B\right)\cdot\varphi\;dtdx\notag\\
&=\int_{\mathbb{R}^{+}\times \mathbb{R}^{3}}j^{\gamma}\times\left( B^{\gamma}_{\ll}-B_{\ll}\right)\cdot\varphi\;dtdx+\int_{\mathbb{R}^{+}\times \mathbb{R}^{3}}j^{\gamma}\times\left( B_{\ll}-B\right)\cdot\varphi\;dtdx\notag\\
&\quad+\int_{\mathbb{R}^{+}\times \mathbb{R}^{3}}\left(j^{\gamma}-j\right)\times B\cdot\varphi\;dtdx
+\int_{\mathbb{R}^{+}\times \mathbb{R}^{3}}j^{\gamma}\times\left( B^{\gamma}_{<}+B^{\gamma}_{\sim}+B^{\gamma}_{>}\right)\cdot\varphi\;dtdx\notag\\
&\quad+\int_{\mathbb{R}^{+}\times \mathbb{R}^{3}}\left(j^{\gamma}\times B^{\gamma}_{\gg}\right)\cdot\varphi\;dtdx.
\end{align}

Firstly, we handle with the last term in \eqref{decompose}, which can be recast as
\begin{align*}
\int_{\mathbb{R}^{+}\times \mathbb{R}^{3}}\left(j^{\gamma}\times B^{\gamma}_{\gg}\right)\cdot\varphi\;dtdx
&=\int_{\mathbb{R}^{+}\times \mathbb{R}^{3}}j^{\gamma}\times \mathcal{F}^{-1}\left(\chi_{\left\{|\xi|> \phi\left(\frac{\gamma}{\delta}\right)\right\}}\hat{B}^{\gamma}\right)\cdot\varphi\;dtdx \\
&=\int_{\mathbb{R}^{+}\times \mathbb{R}^{3}} j^{\gamma}\times \mathcal{F}\left(\chi_{\left\{|\xi|> \phi\left(\frac{\gamma}{\delta}\right)\right\}}\check{B}^{\gamma}\right)\cdot\varphi \;dtdx \\
&=\int_{\mathbb{R}^{+}\times \mathbb{R}^{3}}\chi_{\left\{|\xi|> \phi\left(\frac{\gamma}{\delta}\right)\right\}}
\mathcal{F}\left(j^{\gamma}\varphi\right)\times\check{B}^{\gamma}\;dtdx\\
&=\int_{\mathbb{R}^{+}\times \mathbb{R}^{3}}\left(j^{\gamma}\varphi\right)_{\gg}\times B^{\gamma}\;dtdx,
\end{align*}
where
$$\left(j^{\gamma}\varphi\right)_{\gg}=\mathcal{F}^{-1}\left(\chi_{\left\{|\xi|> \phi\left(\frac{\gamma}{\delta}\right)\right\}}\mathcal{F}\left(j^{\gamma}\varphi\right)\right).$$
Noted that for any $\delta>0$ and $\varphi\in C_{c}^{\infty}\left(\mathbb{R}^{+}\times \mathbb{R}^{3}\right)$
\begin{align*}
\left\|\left(j^{\gamma}\varphi\right)_{\gg}\right\|^{2}_{L_{t,x}^{2}}
&=\int_{\mathbb{R}^{+}\times \mathbb{R}^{3}}\left(\left(j^{\gamma}\varphi\right)_{\gg}\right)_{\gg}j^{\gamma}\varphi\;dtdx\\
&=\int_{\mathbb{R}^{+}\times \mathbb{R}^{3}}\left(j^{\gamma}\varphi\right)_{\gg}j^{\gamma}\varphi\;dtdx\\
&\lesssim \left\|\left(j^{\gamma}\varphi\right)_{\gg}\right\|_{L_{t,x,\rm loc}^{2}},
\end{align*}
and
\begin{align*}
\left\|\left(j^{\gamma}\varphi\right)_{\gg}\right\|_{L_{t,x,\rm loc}^{2}}
&=\left\|\mathcal{F}^{-1}\left(\int_{\mathbb{R}^{3}}j^{\gamma}(y)\varphi(y)e^{-iy\cdot \xi}dy\chi_{\left\{|\xi|> \phi\left(\frac{\gamma}{\delta}\right)\right\}}\right)\right\|_{L_{t,x,\rm loc}^{2}}\\
&=\left\|\mathcal{F}^{-1}\left(\int_{\mathbb{R}^{3}}j^{\gamma}\left(\frac{z}{|\xi|}\right)\varphi\left(\frac{z}{|\xi|}\right)e^{\frac{-iz\cdot \xi}{|\xi|}}\frac{1}{|\xi|}dz\chi_{\left\{|\xi|> \phi\left(\frac{\gamma}{\delta}\right)\right\}}\right)\right\|_{L_{t,x,\rm loc}^{2}}.
\end{align*}
Letting $\gamma\rightarrow 0$ in the above equality, and applying the assumption \eqref{higj} on the very high frequencies of $j^{\gamma}$,  we have
\begin{equation}\label{convhh}
\limsup\limits_{\gamma\rightarrow 0}\left\|\left(j^{\gamma}\varphi\right)_{\gg}\right\|^{2}_{L_{t,x}^{2}}
\lesssim\limsup\limits_{\gamma\rightarrow 0}\left\|\left(j^{\gamma}\right)_{\gg}\right\|_{L_{t,x,\rm loc}^{2}}=0.
\end{equation}

Now, we deal with the term $B^{\gamma}_{\ll}-B_{\ll}$. For any small $h>0$, applying Lemma \ref{lemma3.2} and the estimates \eqref{esteb} and \eqref{es}, we find from \eqref{B1} that
\begin{align*}
&\left|\hat{B}^{\gamma}_{\ll}(t+h)-\hat{B}^{\gamma}_{\ll}(t)\right|\\
&\lesssim e^{-\frac{1}{2\beta\eta^{2}\gamma^{2}}t}\left(\left|\gamma\hat{B}^{0\gamma}\right|+\left|\gamma b^{0\gamma}\right|+\gamma^{2}|\xi|^{2}\left|\hat{B}^{0\gamma}\right|\right)\chi_{\{0\leqslant|\xi|\leqslant R\}}\\
&\quad+e^{-\beta\eta^{2}|\xi|^{2}t}h|\xi|^{2}\left(\left|\gamma b^{0\gamma}\right|+\left|\hat{B}^{0\gamma}\right|\right)\chi_{\{0\leqslant|\xi|\leqslant R\}}\\
&\quad+\int_{0}^{t}\left[e^{-\beta\eta^{2}|\xi|^{2}(t-\tau)}h|\xi|^{2}+e^{-\frac{1}{2\beta\eta^{2}\gamma^{2}}(t-\tau)}\right]\\
&\qquad
\left(|\xi|\left|\partial_{\tau}\hat{u}^{\gamma}\right|+\left|\xi\right|^{2}\left|\hat{G}_{4}^{\gamma}\right|
+|\xi|^{3}\left|\hat{u}^{\gamma}\right|+|\xi|\left|\hat{G}_{3}^{\gamma}\right|\right)\chi_{\{0\leqslant|\xi|\leqslant R\}}d\tau\\
&\quad+\int_{0}^{\infty}\!\!\chi_{[0,h]}(v)\left(e^{-\beta\eta^{2}|\xi|^{2}v}\!+\!e^{-\frac{1}{2\beta\eta^{2}\gamma^{2}}v}\right)\\
&\qquad
\left(|\xi|\left|\partial_{\tau}\hat{u}^{\gamma}\right|
+\!|\xi|^{2}\left|\hat{G}_{4}^{\gamma}\right|\!
+\!|\xi|^{3}\left|\hat{u}^{\gamma}\right|\!+\!|\xi|\left|\hat{G}_{3}^{\gamma}\right|\right)(t+h-v)\chi_{\{0\leqslant|\xi|\leqslant R\}}dv.
\end{align*}
Thus{\small
\begin{align*}
&\left\|\hat{B}^{\gamma}_{\ll}(t+h)-\hat{B}^{\gamma}_{\ll}(t)\right\|_{L_{t,\rm loc}^{2}L_{\xi}^{2}}\\
&\lesssim \gamma\left\|e^{-\frac{1}{2\beta\eta^{2}\gamma^{2}}t}\right\|_{L^{2}_{t}}
\left(\left\|\hat{E}^{0\gamma}\right\|_{L_{x}^{2}}+\left\|\hat{B}^{0\gamma}\right\|_{L_{x}^{2}}\right)+
h\left\|\left\|e^{-\beta\eta^{2}|\xi|^{2}t}\right\|_{L^{2}_{t}}|\xi|^{2}
\left(\left|\hat{E}^{0\gamma}\right|+\left|\hat{B}^{0\gamma}\right|\right)\chi_{\{0\leqslant|\xi|\leqslant R\}}\right\|_{L^{2}_{\xi}}\\
&\quad+h\left\|\left\|e^{-\beta\eta^{2}|\xi|^{2}t}\right\|_{L^{1}_{t}}\left\|
|\xi|^{3}\left|\partial_{t}\hat{u}^{\gamma}\right|+|\xi|^{4}\left|\hat{G}_{4}^{\gamma}\right|
\!+\!|\xi|^{5}\left|\hat{u}^{\gamma}\right|+|\xi|^{3}\left|\hat{G}_{3}^{\gamma}\right|\right\|_{L^{2}_{t,\rm loc}}\chi_{\{0\leqslant|\xi|\leqslant R\}}\right\|_{L^{2}_{\xi}}\\
&\quad+\left\|e^{-\frac{1}{2\beta\eta^{2}\gamma^{2}}t}\right\|_{L^{1}_{t}}
\left\|\left(|\xi||\partial_{t}\hat{u}^{\gamma}|\!+\!|\xi|^{2}\left|\hat{G}_{4}^{\gamma}\right|+|\xi|^{3}|\hat{u}^{\gamma}|+
|\xi|\left|\hat{G}_{3}^{\gamma}\right|\right)\chi_{\{0\leqslant|\xi|\leqslant R\}}\right\|_{L^{2}_{t,\rm loc}L^{2}_{\xi}}\\
&\quad+\left\|\left\|e^{-\beta\eta^{2}|\xi|^{2}t}\chi_{[0,h]}(t)\right\|_{L^{1}_{t}}
\left\||\xi||\partial_{t}\hat{u}^{\gamma}|\!+\!|\xi|^{2}\left|\hat{G}_{4}^{\gamma}\right|\!+\!|\xi|^{3}|\hat{u}^{\gamma}|\!+
\!|\xi|\left|\hat{G}_{3}^{\gamma}\right|\right\|_{L^{2}_{t,\rm loc}}\chi_{\{0\leqslant|\xi|\leqslant R\}}\right\|_{L^{2}_{\xi}}\\
&\quad+\left\|e^{-\frac{1}{2\beta\eta^{2}\gamma^{2}}t}\chi_{[0,h]}(t)\right\|_{L^{1}_{t}}
\left\|\left(|\xi||\partial_{t}\hat{u}^{\gamma}|\!+\!|\xi|^{2}\left|\hat{G}_{4}^{\gamma}\right|\!+\!|\xi|^{3}|\hat{u}^{\gamma}|\!
+\!|\xi|\left|\hat{G}_{3}^{\gamma}\right|\right)\chi_{\{0\leqslant|\xi|\leqslant R\}}\right\|_{L^{2}_{t,\rm loc}L^{2}_{\xi}}\\
&\lesssim \left(\gamma^{2}+hR\right)\left(\left\|E^{0\gamma}\right\|_{L_{x}^{2}}+\left\|B^{0\gamma}\right\|_{L_{x}^{2}}\right)+\left(
h+\gamma^{2}\right)(R^{4}\left\|\partial_{t}u^{\gamma}\right\|_{L_{t,\rm loc}^{2}\dot{H}_{x}^{-3}}
+R^{2}\left\|u^{\gamma}\right\|_{L_{t,\rm loc}^{2}\dot{H}_{x}^{1}})\\
&\quad +\left(h+\gamma^{2}\right)\left(R^{2}\left\|\hat{G}_{4}^{\gamma}\chi_{\{0\leqslant|\xi|\leqslant R\}}\right\|_{L_{t,\rm loc}^{2}L_{\xi}^{2}}\!+\!R\left\|\hat{G}_{3}^{\gamma}\chi_{\{0\leqslant|\xi|\leqslant R\}}\right\|_{L_{t,\rm loc}^{2}L_{\xi}^{2}}\right).
\end{align*}}
\!\!By virtue of the uniform estimates of $G_{3}^{\gamma}$ and $G_{4}^{\gamma}$ in \eqref{wconvg3g4} and $\|\partial_{t}u^{\gamma}\|_{L_{t,\rm loc}^{2}\dot{H}_{x}^{-3}} $ in \eqref{partialtu}, and applying Plancherel's theorem, we obtain
$$\limsup\limits_{h\rightarrow 0}\sup_{\gamma}\left\|B^{\gamma}_{\ll}(t+h)-B^{\gamma}_{\ll}(t)\right\|_{L_{t,\rm loc}^{2}L_{\xi}^{2}}=0,$$
whence, utilizing Riesz-Fr\'{e}chet-Kolmogorov compactness criterion (for more details, see \cite{YK}), the relative compactness of $B^{\gamma}_{\ll}$ in the strong topology of $L^{2}_{t,x,\rm loc}$ can be deduced, that is,
\begin{equation}\label{convll}
B^{\gamma}_{\ll}\rightarrow B_{\ll}\;\;{\rm in} \;\;L^{2}_{t,x,\rm loc},
\end{equation}
where $B_{\ll}=\mathcal{F}^{-1}\left(\chi_{\{0\leqslant|\xi|\leqslant R\}}\hat{B}\right).$

Next, we handle with the term $B^{\gamma}_{<}$. Employing Lemma \ref{lemma3.2} and the estimates \eqref{esteb} and \eqref{es}, from \eqref{B2}, we find that
\begin{align*}
&\left|\hat{B}^{\gamma}_{<}\right|\\
&\lesssim \left(e^{-\beta\eta^{2}|\xi|^{2}t}+e^{-\frac{1}{2\beta\eta^{2}\gamma^{2}}t}\right)
\left(\left|\gamma b^{0\gamma}\right|+\left|\hat{B}^{0\gamma}\right|
+\left|\xi\times\hat{u}^{0\gamma}\right|\right)\chi_{\left\{R<|\xi|\leqslant\frac{1}{2K\beta\eta^{2}\gamma}\right\}}\\
&\quad+\gamma e^{-\frac{1}{2\beta\eta^{2}\gamma^{2}}t}\left|\hat{B}^{0\gamma}\right|
\!+\!\int_{0}^{t}\left[\frac{1}{\gamma^{2}}e^{-\frac{1}{2\beta\eta^{2}\gamma^{2}}(t-\tau)}+|\xi|^{2}e^{-\beta\eta^{2}|\xi|^{2}(t-\tau)}\right]
\left|\xi\times\hat{u}^{\gamma}\right|\chi_{\left\{R<|\xi|\leqslant\frac{1}{2K\beta\eta^{2}\gamma}\right\}}d\tau\\
&\quad+\int_{0}^{t}e^{-\beta\eta^{2}|\xi|^{2}(t-\tau)}
\left(|\xi|^{2}\left|\hat{G}_{4}^{\gamma}\right|\!+\!|\xi|^{3}\left|\hat{u}^{\gamma}\right|+
|\xi|\left|\hat{G}_{3}^{\gamma}\right|\right)\chi_{\left\{R<|\xi|\leqslant\frac{1}{2K\beta\eta^{2}\gamma}\right\}}d\tau.
\end{align*}
Then, one has
\begin{align*}
&\left\|\hat{B}^{\gamma}_{<}\right\|_{L_{t,\rm loc}^{2}L_{\xi}^{2}}\\
&\lesssim \left\|\left\|e^{-\beta\eta^{2}|\xi|^{2}t}\right\|_{L^{2}_{t}}
\left(\left|\hat{E}^{0\gamma}\right|+\left|\hat{B}^{0\gamma}\right|+\left|\xi\times \hat{u}^{0\gamma}\right|\right)
\chi_{\left\{R<|\xi|\leqslant\frac{1}{2K\beta\eta^{2}\gamma}\right\}}\right\|_{L^{2}_{\xi}}\\
&\quad+\left\||\xi|^{2}\left\|e^{-\beta\eta^{2}|\xi|^{2}t}\right\|_{L^{1}_{t}}\left\|\xi\times \hat{u}^{\gamma}\right\|_{L^{2}_{t,\rm loc}}\chi_{\left\{R<|\xi|\leqslant\frac{1}{2K\beta\eta^{2}\gamma}\right\}}\right\|_{L^{2}_{\xi}}\\
&\quad+\frac{1}{\gamma^{2}}\left\|e^{-\frac{t}{2\beta\eta^{2}\gamma^{2}}}\right\|_{L^{1}_{t}}\left\|\xi\times \hat{u}^{\gamma}
\chi_{\left\{R<|\xi|\leqslant\frac{1}{2K\beta\eta^{2}\gamma}\right\}}\right\|_{L^{2}_{t,\rm loc}L^{2}_{\xi}}\\
&\quad+\left\|\left\|e^{-\beta\eta^{2}|\xi|^{2}t}\right\|_{L^{1}_{t}}
\left\||\xi|^{2}\left|\hat{G}_{4}^{\gamma}\right|+|\xi|^{3}\left|\hat{u}^{\gamma}\right|\!+\!|\xi|\left|\hat{G}_{3}^{\gamma}\right|\right\|_{L^{2}_{t,\rm loc}}
\chi_{\left\{R<|\xi|\leqslant\frac{1}{2K\beta\eta^{2}\gamma}\right\}}\right\|_{L^{2}_{\xi}}\\
&\lesssim\frac{1}{R}\left(\left\|E^{0\gamma}\right\|_{L_{x}^{2}}+\left\|B^{0\gamma}\right\|_{L_{x}^{2}}\right)
+\left\|\hat{u}^{0\gamma}\chi_{\left\{R<|\xi|\leqslant \frac{1}{2K\beta\eta^{2}\gamma}\right\}}\right\|_{L_{\xi}^{2}}
+\left\|\xi\times\hat{u}^{\gamma}\chi_{\left\{R<|\xi|\leqslant \frac{1}{2K\beta\eta^{2}\gamma}\right\}}\right\|_{L_{t,\rm loc}^{2}L_{\xi}^{2}}\\
&\quad+\left\|\hat{G}_{4}^{\gamma}\chi_{\left\{R<|\xi|\leqslant \frac{1}{2K\beta\eta^{2}\gamma}\right\}}\right\|_{L_{t,\rm loc}^{2}L_{\xi}^{2}}+\left\||\xi|^{-1}\hat{G}_{3}^{\gamma}\chi_{\left\{R<|\xi|\leqslant \frac{1}{2K\beta\eta^{2}\gamma}\right\}}\right\|_{L_{t,\rm loc}^{2}L_{\xi}^{2}}\\
&\lesssim\frac{1}{R}\left(\left\|E^{0\gamma}\right\|_{L_{x}^{2}}+\left\|B^{0\gamma}\right\|_{L_{x}^{2}}\right)
+\frac{1}{R^{s}}\left(\left\|u^{0\gamma}\right\|_{\dot{H}^{s}}+\left\|u^{\gamma}\right\|_{L^{2}_{t,\rm loc}\dot{H}^{1+s}}\right)\\
&\quad+R^{\frac{1}{2}-s}\left\|\nabla G_{4}^{\gamma}\right\|_{L^{2}_{t,\rm loc}\dot{H}^{s-\frac{3}{2}}}
+\frac{1}{R}\left\|G_{3}^{\gamma}\right\|_{L^{2}_{t,\rm loc}L^{2}_{\xi}}\\
&\lesssim\frac{1}{R}\left(\left\|E^{0\gamma}\right\|_{L_{x}^{2}}+\left\|B^{0\gamma}\right\|_{L_{x}^{2}}\right)
+\frac{1}{R^{s}}\left(\left\|u^{0\gamma}\right\|_{\dot{H}^{s}}+\left\|u^{\gamma}\right\|_{L^{2}_{t,\rm loc}\dot{H}^{1+s}}\right)\\
&\quad+R^{\frac{1}{2}-s}\left\|\nabla G_{4}^{\gamma}\right\|_{L^{2}_{t,\rm loc}L^{\frac{3}{3-s}}}+\frac{1}{R}\left\|G_{3}^{\gamma}\right\|_{L^{2}_{t,\rm loc}L^{2}_{\xi}}.
\end{align*}
In view of the uniform bounds \eqref{bddg3g4} of $G^{\gamma}_{3}$ and $G^{\gamma}_{4}$, we conclude that
\begin{equation}\label{convl}
\limsup\limits_{\gamma\rightarrow 0}\left\|B^{\gamma}_{<}\right\|_{L_{t,\rm loc}^{2}L_{\xi}^{2}}\lesssim R^{\frac{1}{2}-s}.
\end{equation}

Thirdly, we deal with the term $B_{\sim}^{\gamma}$. Clearly, \eqref{B2} can be recast as
\begin{align*}
\hat{B}^{\gamma}&=\left[\gamma e^{t\lambda_{-}}+(1-\gamma)e^{t\lambda_{+}}\right]B^{0\gamma}+\gamma\lambda_{-}te^{t\lambda_{+}}\frac{1-e^{t(\lambda_{-}-\lambda_{+})}}{t(\lambda_{-}-\lambda_{+})}sb^{0\gamma}\\
&\quad+(1-\gamma)\lambda_{+}te^{t\lambda_{+}}\frac{1-e^{t(\lambda_{-}-\lambda_{+})}}{t(\lambda_{-}-\lambda_{+})}B^{0\gamma}
-\frac{1}{\beta\eta\gamma^{2}}te^{t\lambda_{+}}\frac{1-e^{t(\lambda_{-}-\lambda_{+})}}{t(\lambda_{-}-\lambda_{+})}\xi\times\hat{u}^{0\gamma}\\
&\quad-\frac{i}{\beta\eta\gamma^{2}}\int_{0}^{t}e^{(t-\tau)\lambda_{-}}\xi\times\hat{u}^{\gamma}
+\lambda_{+}(t-\tau)e^{(t-\tau)\lambda_{+}}\frac{e^{(t-\tau)(\lambda_{-}-\lambda_{+})}-1}{(t-\tau)(\lambda_{-}-\lambda_{+})}\xi\times\hat{u}^{\gamma}d\tau\\
&\quad+\frac{1}{\beta\eta\gamma^{2}}\int_{0}^{t}(t-\tau)e^{(t-\tau)\lambda_{+}}\frac{1-e^{(t-\tau)(\lambda_{-}-\lambda_{+})}}{(t-\tau)(\lambda_{-}-\lambda_{+})}
\xi\times\left(i\xi\cdot\hat{G}_{4}^{\gamma}+|\xi|^{2}\hat{u}^{\gamma}-\frac{1}{\eta}\hat{G}_{3}^{\gamma}\right)d\tau.
\end{align*}
For fixed $1<K<\frac{\sqrt{5}}{2}$, using the complex mean value theorem (see \cite{PDM}), we can infer that
$$ \left|e^{t\lambda_{+}}\frac{1-e^{t(\lambda_{-}-\lambda_{+})}}{t(\lambda_{-}-\lambda_{+})}\right|\leq e^{-\frac{\omega}{\gamma^{2}}t}$$
for some constant $\omega>0$ depending on $K$, provided $\frac{1}{2K\beta\eta^{2}\gamma}<|\xi|\leqslant\frac{K}{2\beta\eta^{2}\gamma}$. Combining Lemma \ref{lemma3.2}, estimates \eqref{esteb} and \eqref{es}, we have
\begin{align*}
\left|\hat{B}^{\gamma}_{\sim}\right|&\lesssim\left(\gamma e^{-\frac{1}{2\beta\eta^{2}\gamma^{2}}t}+
e^{\frac{-K+\sqrt{K^{2}-1}}{2K\beta\eta^{2}\gamma^{2}}t}+\frac{1}{\gamma^{2}}te^{-\frac{\omega}{\gamma^{2}}t}\right)\left|\hat{B}^{0\gamma}\right|
+\frac{1}{\gamma}te^{-\frac{\omega}{\gamma^{2}}t}\left(\left|\hat{B}^{0\gamma}\right|+\left|\hat{E}^{0\gamma}\right|\right)\\
&\quad+\frac{1}{\gamma^{2}}te^{-\frac{\omega}{\gamma^{2}}t}\left|\xi\times\hat{u}^{0\gamma}\right|
\chi_{\left\{\frac{1}{2K\beta\eta^{2}\gamma}<|\xi|\leqslant\frac{K}{2\beta\eta^{2}\gamma}\right\}}\\
&\quad+\frac{1}{\gamma^{2}}\int_{0}^{t}\left[e^{-\frac{1}{2\beta\eta^{2}\gamma^{2}}(t-\tau)}+
\frac{1}{\gamma^{2}}(t-\tau)e^{-\frac{\omega}{\gamma^{2}}(t-\tau)}\right]\left|\xi\times\hat{u}^{\gamma}\right|
\chi_{\left\{\frac{1}{2K\beta\eta^{2}\gamma}<|\xi|\leqslant\frac{K}{2\beta\eta^{2}\gamma}\right\}}d\tau\\
&\quad+\frac{1}{\gamma^{2}}\int_{0}^{t}(t-\tau)e^{-\frac{\omega}{\gamma^{2}}(t-\tau)}
\left(|\xi|^{2}\left|\hat{G}_{4}^{\gamma}\right|+|\xi|^{3}\left|\hat{u}^{\gamma}\right|+|\xi|\left|\hat{G}_{3}^{\gamma}\right|\right)
\chi_{\left\{\frac{1}{2K\beta\eta^{2}\gamma}<|\xi|\leqslant\frac{K}{2\beta\eta^{2}\gamma}\right\}}d\tau.
\end{align*}
Thus, we get
\begin{align*}
&\left\|\hat{B}^{\gamma}_{\sim}\right\|_{L_{t,\rm loc}^{2}L_{\xi}^{2}}\\
&\lesssim \left( \left\| e^{\frac{-K+\sqrt{K^{2}-1}}{2K\beta\eta^{2}\gamma^{2}}t}\right\| _{L^{2}_{t}}+\frac{1}{\gamma^{2}}\left\|te^{-\frac{\omega}{\gamma^{2}}t}\right\|_{L^{2}_{t}}\right)
\left(\left\|E^{0\gamma}\right\|_{L_{x}^{2}}+\left\|B^{0\gamma}\right\|_{L_{x}^{2}}\right)\\
&\quad+\frac{1}{\gamma^{2}}\left\|te^{-\frac{\omega}{\gamma^{2}}t}\right\|_{L^{2}_{t}}\left\|\xi\times \hat{u}^{0\gamma}\chi_{\left\{\frac{1}{2K\beta\eta^{2}\gamma}<|\xi|\leqslant\frac{K}{2\beta\eta^{2}\gamma}\right\}}\right\|_{L_{\xi}^{2}}\\
&\quad+\frac{1}{\gamma^{2}}\left\|e^{-\frac{1}{2\beta\eta^{2}\gamma^{2}}t}\right\|_{L^{1}_{t}}\left\|\xi\times \hat{u}^{\gamma}\chi_{\left\{\frac{1}{2K\beta\eta^{2}\gamma}<|\xi|\leqslant\frac{K}{2\beta\eta^{2}\gamma}\right\}}\right\|_{L^{2}_{t,\rm loc}L_{\xi}^{2}}\\
&\quad+\frac{1}{\gamma^{2}}\left\|te^{-\frac{\omega}{\gamma^{2}}t}\right\|_{L^{1}_{t}}
\left\|\left(|\xi|^{2}\left|\hat{G}_{4}^{\gamma}\right|+|\xi|^{3}|\hat{u}^{\gamma}|+|\xi|\left|\hat{G}_{3}^{\gamma}\right|\right)
\chi_{\left\{\frac{1}{2K\beta\eta^{2}\gamma}<|\xi|\leqslant\frac{K}{2\beta\eta^{2}\gamma}\right\}}\right\|_{L^{2}_{t,\rm loc}L_{\xi}^{2}}\\
&\lesssim \gamma\left(\left\|E^{0\gamma}\right\|_{L_{x}^{2}}+\left\|B^{0\gamma}\right\|_{L_{x}^{2}}\right)+\gamma^{s}\left(\left\|u^{0\gamma}\right\|_{\dot{H}_{x}^{s}}
+\left\|u^{\gamma}\right\|_{L_{t,\rm loc}^{2}\dot{H}_{x}^{1+s}}\right)\\
&\quad+\gamma^{s-\frac{1}{2}}\left\|\nabla G_{4}^{\gamma}\right\|_{L^{2}_{t,\rm loc}\dot{H}^{s-\frac{3}{2}}}+\gamma\left\|G_{3}^{\gamma}\right\|_{L^{2}_{t,\rm loc}L^{2}_{\xi}}\\
&\lesssim \gamma\left(\left\|E^{0\gamma}\right\|_{L_{x}^{2}}+\left\|B^{0\gamma}\right\|_{L_{x}^{2}}\right)+\gamma^{s}\left(\left\|u^{0\gamma}\right\|_{\dot{H}_{x}^{s}}
+\left\|u^{\gamma}\right\|_{L_{t,\rm loc}^{2}\dot{H}_{x}^{1+s}}\right)\\
&\quad+\gamma^{s-\frac{1}{2}}\left\|\nabla G_{4}^{\gamma}\right\|_{L^{2}_{t,\rm loc}L^{\frac{3}{3-s}}}+\gamma\left\|G_{3}^{\gamma}\right\|_{L^{2}_{t,\rm loc}L^{2}_{\xi}}.
\end{align*}
By virtue of the uniform bounds \eqref{bddg3g4} of $G^{\gamma}_{3}$ and $G^{\gamma}_{4}$, we deduce that
\begin{equation}\label{convmid}
\limsup\limits_{\gamma\rightarrow 0}\left\|B^{\gamma}_{\sim}\right\|_{L_{t,\rm loc}^{2}L_{\xi}^{2}}=0.
\end{equation}

Finally, we deal with the term $B^{\gamma}_{>}$. Combining Lemma \ref{lemma3.2}, estimates \eqref{esteb} and \eqref{es}, it follows from \eqref{B2} that
\begin{align*}
\left|\hat{B}^{\gamma}_{>}\right|
&\lesssim e^{-\frac{1}{2\beta\eta^{2}\gamma^{2}}t}\left(\left|\hat{B}^{0\gamma}\right|+\left|\hat{E}^{0\gamma}\right|\right)
+\frac{1}{\gamma}e^{-\frac{1}{2\beta\eta^{2}\gamma^{2}}t}\left|\hat{u}^{0\gamma}\right|
\chi_{\left\{\frac{K}{2\beta\eta^{2}\gamma}<|\xi|\leqslant\phi\left(\frac{\gamma}{\delta}\right)\right\}}\\
&\quad+\frac{1}{\gamma^{2}}\int_{0}^{t}e^{-\frac{1}{2\beta\eta^{2}\gamma^{2}}(t-\tau)}\left|\xi\times\hat{u}^{\gamma}\right|
\chi_{\left\{\frac{K}{2\beta\eta^{2}\gamma}<|\xi|\leqslant\phi\left(\frac{\gamma}{\delta}\right)\right\}}d\tau\\
&\quad+\frac{1}{\gamma|\xi|}\int_{0}^{t}e^{-\frac{1}{2\beta\eta^{2}\gamma^{2}}(t-\tau)}
\left(|\xi|^{2}\left|\hat{G}_{4}^{\gamma}\right|+|\xi|^{3}\left|\hat{u}^{\gamma}\right|+|\xi|\left|\hat{G}_{3}^{\gamma}\right|\right)
\chi_{\left\{\frac{K}{2\beta\eta^{2}\gamma}<|\xi|\leqslant\phi\left(\frac{\gamma}{\delta}\right)\right\}}d\tau.
\end{align*}
Therefore, we have
\begin{align*}
&\left\|\hat{B}^{\gamma}_{>}\right\|_{L_{t,\rm loc}^{2}L_{\xi}^{2}}\\
&\lesssim \left\|e^{-\frac{1}{2\beta\eta^{2}\gamma^{2}}t}\right\|_{L^{2}_{t}}\left(\left\|E^{0\gamma}\right\|_{L_{x}^{2}}+\left\|B^{0\gamma}\right\|_{L_{x}^{2}}
+\frac{1}{\gamma}\left\|\hat{u}^{0\gamma}\chi_{\left\{\frac{K}{2\beta\eta^{2}\gamma}<|\xi|\leqslant\phi\left(\frac{\gamma}{\delta}\right)\right\}
}\right\|_{L_{\xi}^{2}}\right)\\
&\quad+\frac{1}{\gamma^{2}}\left\|e^{-\frac{1}{2\beta\eta^{2}\gamma^{2}}t}\right\|_{L^{1}_{t}}\left\|\xi\times \hat{u}^{\gamma}\chi_{\left\{\frac{K}{2\beta\eta^{2}\gamma}<|\xi|\leqslant\phi\left(\frac{\gamma}{\delta}\right)\right\}}\right\|_{L^{2}_{t,\rm loc}L_{\xi}^{2}}\\
&\quad+\frac{1}{\gamma}\left\|e^{-\frac{1}{2\beta\eta^{2}\gamma^{2}}t}\right\|_{L^{1}_{t}}\left\|
\left(|\xi|\left|\hat{G}_{4}^{\gamma}\right|+|\xi|^{2}|\hat{u}^{\gamma}|+\left|\hat{G}_{3}^{\gamma}\right|\right)
\chi_{\left\{\frac{K}{2\beta\eta^{2}\gamma}<|\xi|\leqslant\phi\left(\frac{\gamma}{\delta}\right)\right\}}\right\|_{L^{2}_{t,\rm loc}L_{\xi}^{2}}\\
&\lesssim \gamma\left(\left\|E^{0\gamma}\right\|_{L_{x}^{2}}+\left\|B^{0\gamma}\right\|_{L_{x}^{2}}\right)+\gamma^{s}\left(\left\|u^{0\gamma}\right\|_{\dot{H}_{x}^{s}}
+\left\|u^{\gamma}\right\|_{L_{t,\rm loc}^{2}\dot{H}_{x}^{1+s}}\right)\\
&\quad+\gamma\left(\phi(\frac{\gamma}{\delta})^{\frac{3}{2}-s}\left\|\nabla G_{4}^{\gamma}\right\|_{L^{2}_{t,\rm loc}\dot{H}^{s-\frac{3}{2}}}+\phi(\frac{\gamma}{\delta})^{1-s}\left\|u^{\gamma}\right\|_{L_{t,\rm loc}^{2}\dot{H}_{x}^{1+s}}+
\left\|G_{3}^{\gamma}\right\|_{L^{2}_{t,\rm loc}L^{2}_{x}}\right)\\
&\lesssim  \gamma\left(\left\|E^{0\gamma}\right\|_{L_{x}^{2}}+\left\|B^{0\gamma}\right\|_{L_{x}^{2}}\right)+\gamma^{s}\left(\left\|u^{0\gamma}\right\|_{\dot{H}_{x}^{s}}
+\left\|u^{\gamma}\right\|_{L_{t,\rm loc}^{2}\dot{H}_{x}^{1+s}}\right)\\
&\quad+\delta\left\|\nabla G_{4}^{\gamma}\right\|_{L^{2}_{t,\rm loc}L^{\frac{3}{3-s}}}+\gamma^{\frac{1}{3-2s}}\left\|u^{\gamma}\right\|_{L_{t,\rm loc}^{2}\dot{H}_{x}^{1+s}}
+\gamma\left\|G_{3}^{\gamma}\right\|_{L^{2}_{t,\rm loc}L^{2}_{x}},
\end{align*}
whence, in view of the uniform bounds \eqref{bddg3g4} of $G^{\gamma}_{3}$ and $G^{\gamma}_{4}$, one has
\begin{equation}\label{convh}
\limsup\limits_{\gamma\rightarrow 0}\left\|B^{\gamma}_{<}\right\|_{L_{t,\rm loc}^{2}L_{\xi}^{2}}\lesssim \delta.
\end{equation}

Now, we are ready to prove the weak stability of the Lorentz force $j^{\gamma}\times B^{\gamma}$.
Combining the estimates \eqref{convhh}, \eqref{convll}, \eqref{convl}, \eqref{convmid}, \eqref{convh} and the fact that $j^\gamma \rightharpoonup j$ in $L^2_{
t,x}$, we can deduce from \eqref{decompose} that
\begin{align*}&\limsup\limits_{\gamma\rightarrow 0}\left|\int_{\mathbb{R}^{+}\times \mathbb{R}^{3}}\left(j^{\gamma}\times B^{\gamma}-j\times B\right)\cdot\varphi\;dtdx\right|\\
&\lesssim \int_{\mathbb{R}^{+}\times \mathbb{R}^{3}}j\times\left( B_{\ll}-B\right)\cdot\varphi\;dtdx+\delta+R^{\frac{1}{2}-s}
\end{align*}
Notice that as $R\rightarrow\infty$,
$$B_{\ll}\rightarrow B \;\;{\rm in} \;\;L_{t,\rm loc}^{2}L_{x}^{2},$$
by the arbitrariness of $R>0$ (large) and $\delta>0$ (small), we have
$$\lim\limits_{\gamma\rightarrow 0}\int_{\mathbb{R}^{+}\times \mathbb{R}^{3}}j^{\gamma}\times B^{\gamma}\cdot\varphi\;dtdx=
\int_{\mathbb{R}^{+}\times \mathbb{R}^{3}}j\times B\cdot\varphi\;dtdx,$$
which concludes the desired results of Theorem \ref{Mian}. \qed

\section*{Acknowledgments}
H. Wang was supported by the National Natural Science Foundation of China (No. 11901066), the
Natural Science Foundation of Chongqing (No. cstc2019jcyj-msxmX0167) and Project No. 2019CDXYST0015,  No. 2020CDJQY-A040 supported by the Fundamental Research Funds for the Central Universities. Q. Xu's research was supported by the National Natural Science Foundation of China (No. 11901070), the Natural Science Foundation of Chongqing (No. cstc2018jcyjAX0010) and  the Open Project of Key Laboratory No.CSSXKFKTZ202005, School of Mathematical Sciences, Chongqing Normal University.

\end{document}